\title[Twisted Geometric  K-homology ]
{Twisted  Geometric  K-homology  for  proper  actions  of  discrete groups. }
\author{No\'e B\'arcenas  }
        \address{Centro de Ciencias Matem\'aticas. UNAM \\ Ap.Postal 61-3 Xangari. Morelia, Michoac\'an M\'EXICO 58089}
         \email{barcenas@matmor.unam.mx}
         \urladdr{http://www.matmor.unam.mx/~barcenas}
         \date{\today}
\DeclareMathAlphabet\EuR{U}{eur}{m}{n}
\SetMathAlphabet\EuR{bold}{U}{eur}{b}{n}
\theoremstyle{plain}
\newtheorem{theorem}{Theorem}[section]
\newtheorem{lemma}[theorem]{Lemma}
\newtheorem{proposition}[theorem]{Proposition}
\newtheorem{corollary}[theorem]{Corollary}
\theoremstyle{definition}
\newtheorem{definition}[theorem]{Definition}
\newtheorem{condition}[theorem]{Condition}
\global\let\c@equation=\c@theorem}
\newcommand{\comsquare}[8]                   
{\begin{CD}
#1 @>#2>> #3\\
@V{#4}VV @V{#5}VV\\
#6 @>#7>> #8
\end{CD}
}
\newcommand{\xycomsquare}[8]                   
{\xymatrix
{#1 \ar[r]^{#2} \ar[d]^{#4} &
#3 \ar[d]^{#5}  \\
#6\ar[r]^{#7} &
#8
}
}
\newcommand{\calu}{{\mathcal U}}
\newcommand{\IC}{{\mathbb C}}
\newcommand{\IR}{{\mathbb R}}
\newcommand{\IZ}{{\mathbb Z}}
\newcommand{\ind}{\operatorname{ind}}
\newcommand{\Spin}{\operatorname{Spin}}
\newcommand{\SO}{\operatorname{SO}}
\newcommand{\pt}{\{\bullet\}}
\newcommand{\cover}{\mathcal{N}_{G} \mathcal{U}}
\newcommand{\ktheory}[3]{K_{#1}^{#2}(#3, P)}
\newcommand{\Fred}{\ensuremath{{\mathrm{Fred}}}}
\newcommand{\kgeom}[3] {K^{\rm geo}_ {#1}({#2},{#3})}
\newcommand{\UU}{\mathcal{U}}
\newcommand{\HH}{\mathcal{H}}
\newcommand{\KK}{\mathcal{K}}
\newcommand{\eub}[1]{\underline{E}#1}              
\newcommand{\higherlim}[3]{{\setbox1=\hbox{\rm lim}
        \setbox2=\hbox to \wd1{\leftarrowfill} \ht2=0pt \dp2=-1pt
        \mathop{\vtop{\baselineskip=5pt\box1\box2}}
        _{#1}}^{#2}#3}
\newcommand{\version}[1]                       
{\begin{center} last edited on #1\\
last compiled on \today\\
name of texfile: \jobname
\end{center}
}
\newcounter{commentcounter}
\begin{document}

\begin{abstract}
We  define  Twisted  Equivariant  $K$-Homology  groups   using  geometric  cycles.  We  compare  them  with  analytical  approaches  using  Kasparov KK-Theory  and  (twisted) $C^*$-algebras  of  groups  and  groupoids. 

\end{abstract}

  \maketitle

\typeout{----------------------------  linluesau.tex  ----------------------------}


\setcounter{section}{0}

\section{Introduction}

In  this  work,   we introduce  a  geometric  notion  of  twisted  equivariant  $K$-Homology  groups  for  proper  $G$-CW  complexes.

  Computational   evidence  \cite{barcenasvelasquez}, \cite{echterhofflueckphillipswalters},  as  well  as analytical  methods  related  to the  Dirac-Dual  Dirac  method  for  the  Baum-Connes  conjecture \cite{echterhoffchabertgroupextensions}, \cite{echterhoffemersonkim} have  suggested the relation  of  a   Twisted  Equivariant  K-Homology  Theory  to the  topological  $K$-Theory  of  twisted  group  $C^*$-algebras.
  
However,  the  definition  of  such  an  invariant  has  been  only given  in particular  cases    \cite{echterhoffchabertgroupextensions}, \cite{echterhoffktheorytwisted} using  Kasparov KK-Theory and  rather  ad-hoc methods inspired  in  ideas related the  solution  of  the  Connes-Kasparov conjecture  by   Chabert, Echterhoff and  Nest \cite{echterhoffchabertnest}.  

 We   describe  twisted  equivariant  $K$-homology  groups using  geometric  cycles  for  proper  actions  of  discrete  groups  on $G$-CW  complexes. We extend  and  generalize  results  from  \cite{baumhigsonschickproper}, \cite{baumcareywang}  to  the  twisted,  respectively  equivariant  case.   
 
The  main  methods   in this  note are  a  combination of  consequences  of  the  pushforward  homomorphism  constructed  in \cite{barcenascarrillovelasquez}, and   ideas  ultimately  originated  in  geometric  pictures  of  $K$-homology \cite{baumhigsonschickproper}, \cite{baumcareywang}.

 We compare  the cycle  version  using  an  index  map  to  an  equivariant  Kasparov  $KK$-Theory  group. We  also  introduce  some  computational   tools,  notably  a  spectral  sequence  to  compute  the  equivariant  $K$-homology  groups  described  here and we compare  the  twisted  equivariant  $K$-homology  invariants   of  the  classifying  space   for  proper  actions  to  the  topological  $K$-theory  of  twisted  versions  of  the  reduced  $C^*$  algebra of  a  discrete group.

This  work  is  organized  as  follows:  

In  section  \ref{sectionreview},  we  recall  definitions,  methods,  and  results  related  to  twisted  equivariant  $K$-Theory  for  proper  actions \cite{barcenasespinozajoachimuribe}, \cite{barcenasespinozauribevelasquez}, \cite{barcenascarrillovelasquez}. 

In  section  \ref{sectiongeometricapproach},  geometric  cycles   for  twisted  equivariant  $K$-homology  are  introduced,  as  well as  appropriate equivalence  relations (bordism,  isomorphism  and vector bundle  modification). It  is  proved  in  Theorem \ref{theoremhomologicproperties}, that the  geometric  Twisted  Equivariant $K$-Homology  groups  satisfy  dual  homological properties  to  the  twisted  equivariant  $K$-Theory  groups  in  \cite{barcenasespinozajoachimuribe}. In  that  section  computational  methods, including a  spectral  sequence abutting to  Twisted  Equivariant  $K$-Homology      are  addressed.
 
In  section  \ref{analyticalkhom}, the construction  is  compared  to  equivariant  Kasparov $KK$-groups  via  the  definition  of  an index  map,  which  is  proved in Theorem \ref{theoremcoincidencekhomology} to  give  an  isomorphism  between  both  constructions in the  case  of  a  proper,  finite $G$-CW complex. On  the  way,  several  results,  including  versions  of $KK$-Theoretical Poincar\'e  Duality \cite{echterhoffemersonkim}, \cite{echterhoffktheorytwisted} and  consequences  of  the  Thom  isomorphisms and  Pushforward  Maps  for  Twisted Equivariant $K$-Theory of  \cite{barcenascarrillovelasquez} are discussed.

Finally, the  relation  of  the   construction  depicted  below   with   the   $K$-Theory  of twisted  crossed  products \cite{echterhoffwilliams}, \cite{packerraeburn} is  established in  section \ref{Section Crossed  Products}.

\subsection{Acknowledgements}

The  author  thanks  the  support  of  DGAPA-UNAM, and  CONACYT  research   grants. 

Parts  of this  manuscript  were  written  during a  visit  to  Universit\'e Toulouse III Paul Sabatier, which  was partially  founded  by  the  Laboratoire  International Solomon Lefschetz.

The  author  benefited  from conversations  and  joint  work  with  Mario Vel\'asquez  and  Paulo Carrillo.

\tableofcontents

\section{Review  on Twisted  Equivariant  $K$-Theory and  Pushforward}\label{sectionreview}

Twisted  Equivariant  K-Theory  for  proper  actions  of  discrete  groups  was  introduced  in \cite{barcenasespinozajoachimuribe}. 
We  will  recall   definitions,  results and  methods   from \cite{barcenasespinozajoachimuribe} and  \cite{barcenascarrillovelasquez}.

Let $\HH$ be a separable Hilbert space and 
$$\UU(\HH):= \{ U : \HH \to \HH \mid U\circ U^*= U^*\circ U = \mbox{Id} \}$$ the group
of unitary operators  on $\HH$ with the  $*$-strong  topology.
 
 Consider   the    group $P\UU(\HH)$ with   the  topology determined  by  the exact  sequence
$$1\to S^1\to  \UU(\HH)\to P\UU(\HH)\to  1 .$$

Let $\mathcal{H}$ be a  Hilbert  space. A continuous homomorphism $a$ defined  on a  Lie  group $G$,  $a : G \to P\UU(\HH)$ is called  
stable if the unitary representation $\HH$ induced by the
homomorphism $\widetilde{a}: \widetilde{G}= a^*\UU(\HH) \to
\UU(\HH)$ contains each of the irreducible representations of
$\widetilde{G}$

\begin{definition}\label{def projective unitary G-equivariant stable bundle}
Let  $X$ be  a proper $G$-CW  complex.  A projective unitary $G$-equivariant stable bundle over $X$
is a principal $P\UU(\HH)$-bundle
$$P\UU(\HH) \to P \to X$$
where $P\UU(\HH)$ acts on the right, endowed with a left $G$-action lifting the action on $X$ such that:
\begin{itemize}
\item the left $G$-action commutes with the right $P\UU(\HH)$
action, and \item for all $x \in X$ there exists a
$G$-neighborhood $V$ of $x$ and a $G_x$-contractible slice $U$ of
$x$ with $V$ equivariantly homeomorphic to $ U \times_{G_x} G$
with the action $$G_x \times (U \times G) \to U \times G, \ \ \ \
k \cdot(u,g)= (ku, g k^{-1}),$$ together with a local
trivialization
$$P|_V \cong  (P\UU(\HH) \times U) \times_{G_x} G$$ where the action of the isotropy group
is:
 \begin{eqnarray*}
 G_x \times \left( (PU(\HH) \times U) \times G \right)& \to & (P\UU(\HH) \times U) \times G
 \\
\ \left(k , ((F,y),g)\right)& \mapsto & ((f_x(k)F, ky), g k^{-1})
\end{eqnarray*} with $f_x : G_x \to PU(\HH)$ a fixed stable
homomorphism.
\end{itemize}
\end{definition}

Projective  unitary  $G$-equivariant stable  bundles are  the  main  tool  to  introduce twisted  equivariant  $K$-Theory  as  sections  of  Fredholm  bundles. Other  topologies  have  been   used  on  the  relevant  operator  spaces \cite{barcenasespinozajoachimuribe}, \cite{barcenasvelasquezatiyahsegal}. However,  they  are  all  easily  seen  to  agree (\cite{barcenasvelasquezatiyahsegal}, Section 1.1. See  also  \cite{espinozauribeunitaryoperators}). 

Projective   unitary  $G$-equivariant   stable bundles  have  been  classified  in \cite{barcenasespinozajoachimuribe}, Theorem 4.8 in  page  1341.  The  isomorphism  classes  of   projective   unitary  stable  $G$-equivariant  bundles over  a  proper  $G$-CW  complex $X$  are  in  correspondence  with  the  third  Borel  Cohomology  Group  $H^3(X\times_G EG, \IZ)$. There  exists  an  intrinsic abelian  group  structure on the   isomorphism  classes  of  projective  unitary  stable  $G$-equivariant  bundles for which  this correspondence is  an isomorphism  of  abelian groups.

  Let  $X$  be a  proper  $G$-CW  complex. A $G$-Hilbert  bundle  over $X$  is  a locally  trivial  bundle $E\to X$  with  fiber  on  a Hilbert  space  $\HH$ and  structural  group  the group of  unitary operators  $\UU(\HH)$ with the $*$-strong operator  topology. 
  
The Bundle  of  Hilbert-Schmidt operators with  the  $*$-strong  topology  between  Hilbert  Bundles  $E$ and $F$  will  be  denoted by $L_{HS}(E,F)$.

Let  $P$  be  a projective  unitary  stable  $G$-equivariant  bundle. Consider the projective  unitary  stable  $G$-equivariant  bundle  $-P$  associated  to  the  additive  inverse   of  $P$   using  the  isomorphism  from  Theorem 4.8  of  \cite{barcenasespinozajoachimuribe}  to the Borel  cohomology  group and  form  similarly  the  class $P+ -P$. The  classification  of  Projective  Unitary  stable  $G$-equivariant  bundles \cite{barcenasespinozajoachimuribe}  gives:

\begin{proposition}
There  exists  a  Hilbert bundle $E$  over  X    such  that  the  zero  class  $P+-P $ admits  a  representative   given  by  the   principal  bundle  associated  to  the  Bundle  $L_{HS}(E_{P},{E_{P}}^*)$ of  Hilbert-Schmidt  homomorphisms  of  $E_{P}$ to  the  dual  bundle ${{E}_P}^*$. 
\end{proposition}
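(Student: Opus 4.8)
The plan is to exploit the group isomorphism of Theorem 4.8 of \cite{barcenasespinozajoachimuribe}, which identifies the isomorphism classes of projective unitary stable $G$-equivariant bundles with $H^{3}(X\times_{G}EG,\IZ)$ as abelian groups. Under this identification a stable bundle represents the neutral element $P+-P$ precisely when its Dixmier--Douady type class vanishes, equivalently when the principal $P\UU(\HH)$-bundle lifts to a $G$-equivariant Hilbert bundle, i.e. admits a reduction of its structural group to $\UU(\HH)$. Thus it suffices to produce, out of $P$, a \emph{stable} projective unitary $G$-equivariant bundle whose structural group so reduces, and to recognise it as $L_{HS}(E_{P},E_{P}^{*})$. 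Since $P+-P=0$ in the group, any representative of the neutral element would do, and the Hilbert--Schmidt construction furnishes a canonical one attached to $P$.

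First I would use the conjugation homomorphism. The assignment $[u]\mapsto\bigl(T\mapsto uTu^{*}\bigr)$ is a well defined, linear and isometric action of $P\UU(\HH)$ on the separable Hilbert space $L_{HS}(\HH,\HH)$ of Hilbert--Schmidt operators, so it defines a genuine continuous homomorphism $\operatorname{Ad}\colon P\UU(\HH)\to\UU\bigl(L_{HS}(\HH,\HH)\bigr)$; the central circle acts trivially by conjugation, which is exactly why the projective action is realised by honest unitary operators. Applying the associated-bundle construction to $P$ along $\operatorname{Ad}$ yields an honest $G$-equivariant Hilbert bundle $E$, and under the $P\UU(\HH)$-equivariant unitary isomorphism $L_{HS}(\HH,\HH)\cong\HH\otimes\overline{\HH}$ this is precisely the bundle $L_{HS}(E_{P},E_{P}^{*})$. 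Because $E$ carries a $\UU\bigl(L_{HS}(\HH,\HH)\bigr)$-structure by construction, the principal $P\UU$-bundle obtained by projecting $\UU\to P\UU$ (after a stable unitary identification $L_{HS}(\HH,\HH)\cong\HH$) lifts to this Hilbert bundle; hence its twisting class vanishes and it represents the neutral element, namely $P+-P$. Read through $\HH\otimes\overline{\HH}$ the same computation exhibits this class concretely as $[P]+(-[P])$, matching the additive structure of Theorem 4.8.

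The step I expect to be the main obstacle is verifying that $L_{HS}(E_{P},E_{P}^{*})$ is a genuine projective unitary \emph{stable} $G$-equivariant bundle, that is, that the local model of Definition \ref{def projective unitary G-equivariant stable bundle} holds with a stable structural homomorphism. On a slice the structural homomorphism becomes $\operatorname{Ad}\circ f_{x}\colon G_{x}\to\UU\bigl(L_{HS}(\HH,\HH)\bigr)$, and one must show that the resulting honest $G_{x}$-representation on $L_{HS}(\HH,\HH)$ is stable, i.e. contains every irreducible representation of $G_{x}$ with infinite multiplicity. I would deduce this from the hypothesis that $f_{x}$ is stable by an absorption argument: writing $L_{HS}(\HH,\HH)\cong\HH\otimes\overline{\HH}$ through a lift $\widetilde{f_{x}}$, any irreducible $\rho$ of $G_{x}$ satisfies $\rho\subset(\rho\otimes\sigma)\otimes\overline{\sigma}$ for a fixed projective irreducible $\sigma$; since $\HH$ contains the projective irreducible $\rho\otimes\sigma$ and $\overline{\HH}$ contains $\overline{\sigma}$, each with infinite multiplicity by stability of $f_{x}$, the tensor product contains $\rho$ with infinite multiplicity.

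The remaining points are routine: checking that all the unitary identifications (the adjoint action, $L_{HS}(\HH,\HH)\cong\HH\otimes\overline{\HH}$, and the stable isomorphism $\HH\otimes\overline{\HH}\cong\HH$) are $G$-equivariant and compatible with the slice charts of Definition \ref{def projective unitary G-equivariant stable bundle}, so that $E$ and its projectivization are legitimate objects of the category and the construction is independent, up to isomorphism, of the auxiliary choices. Once stability and equivariant triviality of the twist are in hand, Theorem 4.8 of \cite{barcenasespinozajoachimuribe} identifies the class of $L_{HS}(E_{P},E_{P}^{*})$ with $P+-P$, completing the argument.
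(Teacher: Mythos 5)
The paper does not actually prove this proposition: it is stated as an immediate consequence of the classification of projective unitary stable $G$-equivariant bundles (Theorem 4.8 of \cite{barcenasespinozajoachimuribe}), with no argument supplied beyond that citation. There is therefore no written proof to compare yours against; what your proposal does is supply the explicit construction that the citation presupposes, and in substance it is the right one. The adjoint homomorphism $\operatorname{Ad}\colon P\UU(\HH)\to \UU\bigl(L_{HS}(\HH,\HH)\bigr)$, the observation that the central circle acts trivially so that the associated bundle is an honest $G$-Hilbert bundle (hence has vanishing class), and the absorption argument $\rho\subset\rho\otimes\sigma\otimes\overline{\sigma}$ establishing stability of the slice representations are exactly the ingredients needed to produce a canonical representative of the neutral element attached to $P$; the final identification of its class is, as in the paper, an appeal to Theorem 4.8.

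One point should be corrected. The fibre you construct is $L_{HS}(\HH,\HH)\cong\HH\otimes\overline{\HH}$ with the conjugation action $u\otimes\overline{u}$, on which the centre acts with weight $1-1=0$; globally this is $E_P\otimes E_P^{*}=\operatorname{End}_{HS}(E_P)=L_{HS}(E_P,E_P)$, and it does represent $P+(-P)=0$. The bundle of Hilbert--Schmidt homomorphisms \emph{from $E_P$ to $E_P^{*}$}, taken literally, has fibre $L_{HS}(\HH,\HH^{*})\cong\overline{\HH}\otimes\overline{\HH}$, on which the centre acts with weight $-2$; that representation does not descend to $P\UU(\HH)$ by honest unitaries and the associated bundle would represent $-2P$, not $0$. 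Your sentence identifying $\HH\otimes\overline{\HH}$ with $L_{HS}(E_P,E_P^{*})$ conflates these two bundles (as does the statement of the proposition itself); the construction is correct, but the label should read $L_{HS}(E_P,E_P)$.
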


  \begin{definition}\label{definitiongeometricktheory}
Let $X$ be a connected $G$-space and $P$ a projective unitary stable
  $G$-equivariant bundle over $X$,  with associated  Hilbert  space $\HH$.

 Denote  by $\Fred(\HH)$  the  space  of  $*$-strong continuous,  Fredholm  operators  on $\HH$ and  recall  that  the group  of  projective  unitary  operators  $P\UU(\HH)$  in  the  ${}^*$-strong  topology  acts  continuously   by  conjugation on $\Fred( \HH)$. The  twisted  equivariant  $K$-Theory  groups    denoted  by   $K^{-p}_G(X;P)$   are  defined in  \cite{barcenasespinozajoachimuribe} as  the $p-th$  homotopy  groups (based a at  a  suitable  section $s$)  of  the  space  of   $G$-invariant sections  of  the  bundle  with  fibre  $\Fred(\HH)$  and  structural  group  $ P \UU( \HH)$  associated  to  $P$. In  symbols, 
      
$$   K^{-p}_G(X;P):= \pi_p \left( \Gamma(X;\Fred({P})_{s^*})^G, s \right).$$

Given  a  $G$-CW  pair  $(X,A)$,  one  defines  the  relative   twisted  equivariant $K$- Theory  groups     $K^{-p}_G(X, A;P)$  in an analogous  way.  
\end{definition}

Let  $G$ be  a  discrete  group. A   proper, $G$-compact manifold  is  a  smooth, orientable   manifold  with  an  orientation  preserving,  proper and  smooth   action  of  the  group  $G$,  in  such a  way  that  the  quotient  space  $G/M$  is  compact.

Given a  proper  $G$-compact  manifold  $M$ of  dimension  $n$,  consider  the  tangent  bundle  $T M$.  It  is  a  real $G$-vector  bundle over  $X$ of  rank  $n$.  Let $F(M)$ be  the  associated $\SO(n)$-principal,  $G$-equivariant  bundle.

 A  $G$-spin  structure on $M$  is  a  homotopy  class  of  a  reduction  of  the   bundle  $F(M)$ to a  $\Spin(c)$-principal, $G$-equivariant  bundle.  Given a   choice of  the  $\Spin(c)$-structure  for  $M$  we  denote  by  $-M$  the  manifold  with the  opposite  $\Spin(c)$-structure.

  This  is  the homotopy  class  of  the  reduction   of  the  bundle  $F(M)$ to  the $\Spin(c)$-principal,  $G$-equivariant  bundle, where  the   right action  of $\Spin(c)$  is  twisted  by  an  automorphism $\epsilon: \Spin(c)\to \Spin(c)$ induced  from the  orientation  reversing  automorphism $- {\rm  id} : \IR^n\to \IR^n$.

We  recall  one of  the  main  results  from  \cite{barcenascarrillovelasquez}, obtained  using  a   version  of   the  Thom  Isomorphism and  suitable deformation  index  maps  in  Twisted  Equivariant  $K$-theory. It  is  stated  there (Proposition 5.18) in  the  more  general  context  of  Metalinear  structures,  but, as  observed  there,  the  proof  readily  applies to the  $\Spin(c)$-case.

\begin{theorem} [Pushforward morphism for proper $G$-manifolds]\label{theopushforward}

Let $X,Y$ be two  proper, smooth  $\Spin(c)$- $G$- manifolds. Let  $P$ be  a  projective  unitary  stable  $G$-bundle  over  $Y$. Let  $f:X\longrightarrow Y$ be  a  map  such the  $G$-vector  bundle  $T X\oplus f^*TY$  is  an  equivariant $\Spin(c)$-vector bundle of even rank. Then,   there  exists   a  natural  pushforward  homomorphism

$$f!: K_{G}^0(X, f^*(P))\to K_{G}^0(Y, P).$$
\end{theorem}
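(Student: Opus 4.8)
The plan is to follow the classical Atiyah--Hirzebruch factorization of a wrong-way map through an embedding and a projection, now in the twisted equivariant setting, using the Thom isomorphism of \cite{barcenascarrillovelasquez} as the main engine. First I would realize $f$ as a composition of an embedding and a projection. Since $X$ is a proper, $G$-compact manifold, an equivariant embedding theorem lets us embed $X$ equivariantly into a finite-dimensional orthogonal $G$-representation $V$; after enlarging $V$ we may assume $V$ is even-dimensional and $\Spin(c)$. Combining such an embedding $e:X\hookrightarrow V$ with $f$ produces a smooth $G$-embedding $\iota=(f,e):X\hookrightarrow Y\times V$ with $f=q\circ\iota$, where $q:Y\times V\to Y$ is the projection. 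Writing $\nu\to X$ for the normal bundle of $\iota$, one has $\nu\oplus TX\cong \iota^{*}T(Y\times V)\cong f^{*}TY\oplus\underline{V}$.

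The next step is to equip $\nu$ with a $\Spin(c)$-structure. Since $Y$ is $\Spin(c)$ the bundle $f^{*}TY$ is $\Spin(c)$, and $\underline{V}$ is $\Spin(c)$ by the choice of $V$, so $\iota^{*}T(Y\times V)$ is $\Spin(c)$; as $TX$ is $\Spin(c)$ as well, the relation $\nu\oplus TX\cong\iota^{*}T(Y\times V)$ and the two-out-of-three property of $\Spin(c)$-structures under direct sums force a $\Spin(c)$-structure on $\nu$. The hypothesis that $TX\oplus f^{*}TY$ has even rank, i.e.\ $\dim X+\dim Y$ is even, is what guarantees that the cumulative degree shift of the construction below is even, so that $f!$ lands in degree $0$.

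With $\nu$ carrying its $\Spin(c)$-structure of rank $\rk\nu=\dim V+\dim Y-\dim X$, I would assemble $f!$ from three arrows. The twisted equivariant Thom isomorphism of \cite{barcenascarrillovelasquez} gives
$$K_{G}^{0}(X,f^{*}P)\ \xrightarrow{\ \cong\ }\ K_{G,\mathrm{cv}}^{\rk\nu}(\nu,\pi_{\nu}^{*}f^{*}P),$$
twisted equivariant $K$-theory with supports proper over $X$, where $\pi_{\nu}:\nu\to X$ is the projection. A $G$-invariant tubular neighborhood identifies $\nu$ with an open $G$-neighborhood $U$ of $\iota(X)$ in $Y\times V$; since $U$ equivariantly deformation retracts onto $\iota(X)$ and twists are classified by the homotopy-invariant group $H^{3}$ of the Borel construction, the restriction of $q^{*}P$ to $U$ is isomorphic to $\pi_{\nu}^{*}f^{*}P$, and extension by zero along the open inclusion (valid because $G$-compactness of $X$ makes the supports proper over $Y$) gives
$$K_{G,\mathrm{cv}}^{\rk\nu}(\nu,\pi_{\nu}^{*}f^{*}P)\ \to\ K_{G,\mathrm{cv}}^{\rk\nu}(Y\times V,q^{*}P).$$
Finally, the Thom isomorphism (Bott periodicity) for the trivial even-dimensional $\Spin(c)$-bundle $Y\times V\to Y$ yields $K_{G,\mathrm{cv}}^{\rk\nu}(Y\times V,q^{*}P)\cong K_{G}^{\dim Y-\dim X}(Y,P)$. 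Defining $f!$ as the composite, the net shift is $\dim Y-\dim X\equiv \rk(TX\oplus f^{*}TY)\equiv 0\pmod{2}$, so $f!:K_{G}^{0}(X,f^{*}P)\to K_{G}^{0}(Y,P)$ as claimed.

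The main obstacle, and the part demanding the most care, is well-definedness: independence of the choices of $V$, of the embedding $e$, and of the tubular neighborhood, together with naturality in $P$. I would handle this by the standard stabilization argument: any two equivariant embeddings into representations become $G$-isotopic after adding a common $\Spin(c)$-summand, adding such a summand to $V$ corresponds under the construction to post-composition with a Bott/Thom isomorphism and hence leaves the resulting map unchanged, and tubular neighborhoods are unique up to $G$-isotopy. The genuinely delicate bookkeeping is tracking the twist $f^{*}P$ coherently through each identification and verifying that the Thom isomorphism of \cite{barcenascarrillovelasquez} is compatible with these stabilizations and with pullback of twists. This, however, is exactly the content of the Metalinear pushforward established there (Proposition 5.18), and the argument transfers verbatim to the $\Spin(c)$-case.
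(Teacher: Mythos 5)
First, note that the paper does not actually prove this statement: it is recalled verbatim from Proposition 5.18 of \cite{barcenascarrillovelasquez}, where it is established ``using a version of the Thom Isomorphism and suitable deformation index maps,'' i.e.\ essentially by a deformation-to-the-normal-cone/tangent-groupoid construction rather than by the Atiyah--Hirzebruch embedding--projection factorization you propose. Your overall strategy (factor $f$ through an embedding, push forward by a Thom class, project back down by Bott periodicity) is the right classical template, and your bookkeeping of the $\Spin(c)$-structure on $\nu$ and of the parity condition is correct.

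However, there is a genuine gap at the very first step. You invoke ``an equivariant embedding theorem'' to embed the proper $G$-compact manifold $X$ into a finite-dimensional orthogonal $G$-representation $V$. That theorem (Mostow--Palais) holds for \emph{compact} Lie groups; for a proper action of an infinite discrete group $G$ it fails in general. A linear $G$-action on a finite-dimensional $V$ has the origin as a global fixed point, and more seriously many groups acting properly and cocompactly on manifolds admit no faithful finite-dimensional representation at all, so no equivariant embedding $X\hookrightarrow V$ can exist. Consequently $Y\times V$ is not available as an ambient space, and the whole factorization collapses. The standard repair in the proper setting --- and the one implicitly used both in \cite{barcenascarrillovelasquez} and in this paper's own Definition \ref{defnormalvectorbundle} and Lemma \ref{lemmaexistencenormalbundles} --- is to replace the representation $V$ by a $G$-vector bundle: by Lemma 3.7 of \cite{lueckoliverbundles} one finds a $G$-bundle $N\to X$ with $TX\oplus N\cong f^*(F)$ for some $G$-$\Spin(c)$-bundle $F$ over $Y$, embeds $X$ into the total space of $F$ over $Y$ via the zero section composed with $f$, and runs the Thom-isomorphism argument there; alternatively one bypasses tubular neighborhoods altogether with the deformation index maps of the cited reference. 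With that substitution your argument goes through, but as written the proof does not apply to the infinite discrete groups that are the subject of the theorem.
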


\section{Geometric  Approach  to  twisted  equivariant K-Homology}\label{sectiongeometricapproach}

We  describe  now   a   geometric  version of  Twisted  Equivariant  $K$-Homology. The  results  in  this  section extend  and  generalize  methods   by  Baum-Higson-Schick  and  Baum-Carey-Wang \cite{baumhigsonschickproper}, \cite{wanggeometriccycles}, \cite{baumcareywang}.

    Recall  that  a  $G$-CW  complex structure  on  the  pair $(X,A)$  consists  of a  
  filtration of  the $G$-space $X=\cup_{-1\leq n } X_{n}$ with $X_{-1}=\emptyset$,  
  $X_{0}=A$ and every  space   is inductively  obtained  from  the  previous  one   
  by  attaching  cells  with  pushout  diagrams  
  $$\xymatrix{\coprod_{\lambda \in I_n} S^{n-1}\times G/H_{\lambda} \ar[r] \ar[d] & X_{n-1} \ar[d] \\ 
  \coprod_{\lambda \in I_n}D^{n}\times G/H_{\lambda} \ar[r]& X_{n}.} $$ 

The  set $I_n$  is  called  the  set  of  $n$-cells. A  $G$-CW  complex  is  finite  if  it  is  obtained   out  of  finitely  many  orbits  of  cells $S^n\times  G/H$ in  a  finite number  of  dimensions  $n$. 

This  is  equivalent  to  the fact  that  the  quotient $G/X$ is  compact (sometimes called  $G$-compactness or  co-compactness), which  is  also  equivalent  to    $X$  being  compact.  

Let  $G$  be  a discrete  group. Recall  that  given a  $G$-CW  complex  $X$,  the $G$-action on $X$  is  proper exactly  when  all  point stabilizers are  finite. This  is  equivalent  to  slice  conditions due  to  the  slice  theorem  \cite{palais}, \cite{antonyan}.

\begin{definition}\label{defgeomk}[Geometric Twisted  Equivariant  K-Homology]

Let $G$   be a  discrete  group and  let  $X$ be  a  proper  $G$-CW  complex,  let  $P$ be a projective, unitary stable $G$-equivariant bundle.

A geometric  cycle  is a  triple $(M,f,\sigma)$  consisting of 
 \begin{itemize}
 \item  A $G$-compact, proper  orientable, $\Spin(c)$  manifold  without  boundary $M$. 
 \item  A  $G$-equivariant map  $f: M\to X$. 

\item   A homotopy  class of  a  $G$-invariant  section  $\sigma:  M\to \Fred(-f^*(P))$  representing  an  element  on the twisted  equivariant  $K$-theory  group $$K_G^0(M, -f^*(P)).$$

\end{itemize}

\end{definition}

We  will  impose  some  conditions  on geometric  cycles in  order  to  obtain  geometric Twisted  Equivariant  $K$-Homology. 

\begin{enumerate}
\item Isomorphism. The cocycle  $(M,f, \sigma)$  is  isomorphic  to $(M^{'},f^{'}, \sigma^{'})$ if  there  exists  a   $G$-equivariant, $\Spin(c)$-structure preserving diffeomorphism $\psi: M\to  M^{'}$,  which  makes  the  following  diagram commutative: 

$$\xymatrix{M   \ar[rr]^-{\psi} \ar[rd]_-{f} & & M^{'}\ar[dl]^-{f^{'}}\\ & X &   
 }$$

\item Bordism. The  cycle   $(M_0, f_0,\sigma_0 )$is  bordant  to  $(M_1,f_1, ,   \sigma_1 )$  if  there  exists a  cycle  $(W, f,  \sigma )$,  where  $W$  is  a  $G$-compact, orientable  and $\Spin(c)$-manifold  with  boundary, $f: W\to X$ is  a  $G$-equivariant  map, and $\sigma:W\to \Fred(- f^*(P))$ is  the  homotopy  class  of  a  $G$-invariant  section,  with  the  property  that

$$(\partial  W, f\mid_{\partial W}, \sigma  )= (M_0,f_0,  \sigma_0 )\coprod (-M_1, f_1,  \sigma_1)  $$
 
 Where  $(-M_1, f_1, \sigma_1)$  has  the  reversed  orientation  with  respect  to $M_1$.  

\item Vector bundle modification. Given  a  geometric  cycle  $(M,   f,  \sigma )$, and an  even  dimensional real  vector  bundle with  a $G$-equivariant $\Spin(c)$-structure $F\to  M$,  denote  by $\pi:S(F\oplus  \IR)\to  M $  the    unit  sphere  bundle  of $F\oplus  \IR$. 
 We  consider  the map   $S_{M,F}:M\to  S(F\oplus  \IR)$ given  by the  $1$-section and  let  $S_{M,F}!:K_G^0(M, -f^*(P)) \longrightarrow  K_G^0(S(F\oplus  \IR), S_{M,F}^*(-f^*(P)))   $  be  the   pushforward  homomorphism.

Then,  the geometric  cycles  $(M, f,  \sigma )$  and  $S(F\oplus \IR),\pi\circ  f,   S_{M,F}!(\sigma))$ are  equivalent.  

We  will  introduce  the  notation $E\bigstar  (M,f, \sigma)$ for  the class  of  the  modification  of  the   geometric  cycle  $(M,f, \sigma)$  with  respect  to  the $\Spin(c)$-vector  bundle $E\to M$.   

Similarly,  we  will  denote  by $E\bigstar M$  the  manifold  given  as  the  total  space  of  the spherical  bundle $S(E\oplus \IR)$  over $M$.

 \end{enumerate}

The  following  result,  generalization of the  "Composition  Lemma"   on page  81 of  \cite{baumcareywang} in the  \emph{non-equivariant} setting,   will be useful to  study   interactions  between  bordism  and  vector  bundle modification: 
  
\begin{lemma}\label{lemmanormalbordism}
Let $M$  be  a proper, $G$-compact, smooth manifold, consider  a $G$-equivariant, $\Spin(c)$, smooth real vector  bundle of  even  dimensional  fibers, $\pi: E\to M$. Let  $s:M\to S(E\oplus  \IR)$  be  the  unit  section. 

Given  any  real, even  dimensional   smooth  vector  bundle $F$ with  a $\Spin(c)$-structure over  $S(E\oplus  \IR)$, 
the   proper $G$-manifolds 
$$F\bigstar (E\bigstar M), $$
and $$(s^*(F)\oplus E) \bigstar M  $$   
are bordant. 
\end{lemma}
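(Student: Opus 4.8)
The plan is to recognize both manifolds as total spaces of sphere bundles and to exhibit each as the boundary of a canonically associated $\Spin(c)$ $G$-disk bundle; since a disjoint union of null-bordisms is itself a bordism, this produces the desired $W$. Write $N:=E\bigstar M=S(E\oplus\IR)$ with projection $\pi\colon N\to M$ and unit section $s$. By definition $F\bigstar(E\bigstar M)=S(F\oplus\IR)$, a sphere bundle over $N$, while $(s^*F\oplus E)\bigstar M=S(s^*F\oplus E\oplus\IR)$, a sphere bundle over $M$. Both are $G$-compact of the same dimension $\dim M+\dim E+\dim F$, and both inherit $\Spin(c)$-structures from the modification construction.

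First I would record the $\Spin(c)$-structure carried by a modification $S(V\oplus\IR)$ of a $\Spin(c)$-base $B$ along an even-rank $\Spin(c)$-bundle $V$: it comes from the canonical stable isomorphism $TS(V\oplus\IR)\oplus\IR\cong\pi_B^*(TB\oplus V\oplus\IR)$, where the extra $\IR$ is the trivial tautological normal line. Next I would form the $G$-equivariant disk bundles $q\colon D(F\oplus\IR)\to N$ and $D(s^*F\oplus E\oplus\IR)\to M$. Their tangent bundles are $q^*(TN\oplus F\oplus\IR)$ and the pullback of $TM\oplus s^*F\oplus E\oplus\IR$ respectively; since $TM$, $E$, $F$, and hence $TN$ and $s^*F$, are $\Spin(c)$, both total spaces are $G$-compact $\Spin(c)$-manifolds with boundary, and these boundaries are precisely $F\bigstar(E\bigstar M)$ and $(s^*F\oplus E)\bigstar M$. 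Comparing the boundary-induced structure (outward-normal convention) with the stable isomorphism above shows that the induced $\Spin(c)$-structures agree with those of the two modifications, up to the orientation-reversal convention encoded by $-M$ in the bordism relation. Setting $W:=D(F\oplus\IR)\ \sqcup\ \big({-}D(s^*F\oplus E\oplus\IR)\big)$ then gives $\partial W=F\bigstar(E\bigstar M)\ \sqcup\ -\big((s^*F\oplus E)\bigstar M\big)$, as required.

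The main obstacle will be the equivariant $\Spin(c)$- and orientation bookkeeping in this last comparison: one must verify that the structure the modification puts on each sphere bundle matches the one it inherits as the boundary of its disk bundle, reconciling the signs with the $-M$ convention. A conceptual point worth flagging is that $F$ is in general \emph{not} pulled back from $M$ along $\pi$ (equivalently $s\circ\pi\not\simeq\mathrm{id}_N$), so one cannot interpolate $F$ to $s^*F\oplus E$ over a single disk bundle over $M$; this is exactly why the bordism is assembled from two separate disk-bundle null-bordisms rather than from one fibered interpolation. Finally, although the statement concerns bare manifolds, in the applications the cycle data must travel along $W$: the reference map extends over each disk bundle by precomposition with $q$ and the projections, while the section is carried across by the pushforward homomorphism of Theorem \ref{theopushforward}; securing that compatibility—rather than the manifold-level bordism itself—is the substantive content this lemma feeds into later.
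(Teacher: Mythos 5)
Your construction is not the one the paper uses, and although it formally establishes the sentence as literally written, it does so in a way that empties the lemma of its content and cannot support the uses it is put to. The paper's proof produces a \emph{single connected} bordism: form the disk bundle $D(F\oplus\IR)$ over $S(E\oplus\IR)$, embed $M$ into it by composing the $1$-section $M\to S(E\oplus\IR)$ with the $0$-section, observe that the normal bundle $\nu$ of this embedding is $s^*(F)\oplus E\oplus\IR$, and set $W=D(F\oplus\IR)- D_\epsilon(\nu)$, the complement of an open tubular neighbourhood of $M$. The two boundary components of this $W$ are exactly $F\bigstar(E\bigstar M)$ (the outer sphere bundle) and $(s^*(F)\oplus E)\bigstar M$ (the boundary of the deleted neighbourhood), and $W$ maps to $M$ compatibly with both projections. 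Your $W$ is instead the disjoint union of the two disk bundles. At the level of bare manifolds this only records that both modifications are null-bordant, which makes the statement true but vacuous; the lemma is invoked in the subsequent Corollary and in the proof that normal bordism coincides with the cycle equivalence relation, where the specific bordism must carry the reference map to $X$ and the twisted $K$-theory class from one end to the other.

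That is where your construction genuinely fails rather than merely losing information. You assert that ``the section is carried across by the pushforward homomorphism'' over each disk bundle, but the pushforward $S_{M,E}!(\sigma)$ along the unit section is a Thom-type class that is in general nontrivial in the reduced twisted $K$-theory of the sphere bundle, hence is \emph{not} the restriction of any class on the ambient disk bundle: already for $M=\pt$ and $E=\IR^2$ the Bott generator of $\widetilde K^0(S^2)$ does not extend over $D^3$. If your disjoint-union $W$ admitted compatible cycle data, every vector-bundle modification of every cycle would be null-bordant and the groups $\kgeom{G,*}{X}{P}$ would collapse to zero. The complement-of-a-tubular-neighbourhood bordism avoids this precisely because it is connected, so the class need only be defined on $W$ and restrict correctly to the two ends; this is what the ``Composition Lemma'' of Baum--Carey--Wang, which this lemma generalizes, actually provides. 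Your closing observation that the cycle data is the substantive issue is correct, but the mechanism you propose for handling it does not exist; you need the paper's excision construction, not two separate null-bordisms.
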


\begin{proof}
Pick  up a $G$-invariant, hermitian metric  on $E\oplus  \IR$. This  is  possible because  the  action  on $M$  is  proper. Let $D(E\oplus \IR)$  be  the unit  Ball  bundle  therein.  Notice  that  the  boundary  of  $D(E\oplus \IR)$  is $E\bigstar M$. 

Consider  the  map $i: M \to  D(F\oplus  \IR)$  given  by  the  composition  of  the  1-section $M \to  S(E\oplus \IR)$  and  the  $0$-section  $S(E\oplus \IR)\to D(F\oplus \IR)$.

Let  $\nu$  be  the  normal  bundle  to  this  inclusion,  and  consider the  ball  bundle  inside  the  normal  bundle, denoted  by  $D_\epsilon( \nu)$  for  $\epsilon < \frac{1}{4}$. Then, the  sphere  bundle  of $s^*(F)\oplus E  \oplus  \IR$   is $G$-equivariantly  diffeomorphic  to the  boundary  of   the  Ball bundle  in $\nu$. 
Consider  the  manifold  with  boundary $W= D(E\oplus \IR)- D_\epsilon(\nu)$. This  is  a proper, $\Spin(c)$-manifold  with  boundary, which  is  a  bordism  between $F\bigstar (E\bigstar M)$  and  $(s^*(F)\oplus  E)\bigstar M$.

\end{proof}
\begin{corollary}

Given a   geometric  cycle  $(M, f, \sigma)$ and  real  orientable,   and  $\Spin(c)$-vector  bundles  $ \pi: E\to  M$  and $F\to  S(E\oplus   \IR) $,

Denote  by  $\rho_E:E\bigstar M \to M$ and  $\rho_F: F\bigstar (E\bigstar M)\to  E\bigstar  M$ the  projection  maps.  Let  $S_{M, E}: M\to E\bigstar M$ and $S_{M, S^*(F)\oplus  \IR }:M \to S^*F\oplus E \bigstar M $ be  the  $1$-sections into  the  spherical bundles.

 Then, the geometric cycles 
$$\big (  F\bigstar(E\bigstar M), f\circ \rho_E \circ \rho_F,  S_{M,E}!\circ S_{E\bigstar M, F}! (\sigma)\big )  $$
and  
$$ \big ( (S^*(F) \oplus E )\bigstar M, f\circ \rho_{S^*(F)\oplus  F},  S_{M, S^*(F)\oplus \IR} !(\sigma) \big )  $$ 
are  equivalent. 
\end{corollary}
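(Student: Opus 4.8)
The plan is to upgrade the manifold bordism produced by Lemma~\ref{lemmanormalbordism} to a bordism of \emph{geometric cycles}; since the equivalence relation defining geometric twisted $K$-homology is generated by isomorphism, bordism and vector bundle modification, exhibiting such a bordism immediately gives the asserted equivalence. Lemma~\ref{lemmanormalbordism} already supplies a proper, $G$-compact $\Spin(c)$-manifold with boundary $W$ with
\[
\partial W \;=\; F\bigstar(E\bigstar M)\ \coprod\ \bigl(-(S^{*}(F)\oplus E)\bigstar M\bigr),
\]
so the task reduces to endowing $W$ with a $G$-map to $X$ and a homotopy class of $G$-invariant Fredholm section of the appropriate twist, restricting on the two boundary components to the data of the two cycles in the statement.

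First I would produce the reference map. Both $f\circ\rho_E\circ\rho_F$ and $f\circ\rho_{S^{*}(F)\oplus F}$ factor through the base $M$ via bundle projections, and $W$ is assembled from bundles over $M$: concretely $W\subset D(E\oplus\IR)$ carries the ball-bundle projection $p\colon D(E\oplus\IR)\to M$. Setting $f_W:=f\circ p|_W\colon W\to X$ then gives a $G$-map whose restriction to each spherical boundary component is the corresponding composite of projections followed by $f$, so $f_W$ interpolates the two boundary maps and the twist $-f_W^{*}(P)$ restricts to the twists appearing in both boundary cycles.

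The substance of the argument is the section over $W$. Here I would invoke the functoriality of the pushforward of Theorem~\ref{theopushforward} under composition of $\Spin(c)$-embeddings. The section on $F\bigstar(E\bigstar M)$ is the iterated pushforward of $\sigma$ along the $1$-sections $M\to E\bigstar M\to F\bigstar(E\bigstar M)$, which by functoriality equals the single pushforward of $\sigma$ along the composite embedding; the section on $(S^{*}(F)\oplus E)\bigstar M$ is the pushforward of $\sigma$ along the $1$-section $M\to(S^{*}(F)\oplus E)\bigstar M$. The proof of Lemma~\ref{lemmanormalbordism} identifies the relevant normal bundle as $\nu\cong s^{*}(F)\oplus E\oplus\IR$, which is exactly the data matching these two embeddings across $W$. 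I would therefore fix a copy of $M$ inside $W$ (for instance the $0$-section of $D(E\oplus\IR)$, which survives the excision of $D_\epsilon(\nu)$), push $\sigma$ forward along its $\Spin(c)$-embedding into $W$ to obtain a class $\Sigma$ in $K_G^0(W,-f_W^{*}(P))$, and check via the composition law for the Gysin map that the two boundary restrictions of $\Sigma$ are precisely the two prescribed pushforward sections.

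Granting this, the triple $(W,f_W,\Sigma)$ is a geometric cycle realizing a bordism between the two cycles of the statement, so they represent the same class. The main obstacle is this last step: one must choose the intermediate submanifold of $W$ and match its normal $\Spin(c)$-structure and the twist $-f_W^{*}(P)$ so that the \emph{single} pushforward over $W$ restricts to the two \emph{iterated} boundary pushforwards. This rests entirely on functoriality of the pushforward of Theorem~\ref{theopushforward} together with the normal bundle identification $\nu\cong s^{*}(F)\oplus E\oplus\IR$ of Lemma~\ref{lemmanormalbordism}; the map $f_W$ and the underlying $\Spin(c)$-bordism are immediate.
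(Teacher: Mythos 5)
Your proposal is correct and matches the route the paper itself takes: the corollary is stated immediately after Lemma~\ref{lemmanormalbordism} with no separate proof, the intended argument being exactly your upgrade of the manifold bordism $W=D(E\oplus\IR)-D_\epsilon(\nu)$ to a bordism of geometric cycles via the projection to $M$ and the composition property of the pushforward of Theorem~\ref{theopushforward}. You actually supply more detail than the paper does (the choice of reference map $f_W$, the embedded copy of $M$ carrying $\sigma$, and the identification of the iterated Gysin map with a single one via the normal bundle $\nu\cong s^{*}(F)\oplus E\oplus\IR$), all of which is consistent with how the paper later uses this corollary in the proof that normal bordism coincides with the equivalence relation.
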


Addition   on  the  set  of  geometric  cycles  is  defined  as  disjoint  union: 
$$( M, \sigma, f)+ (M^{'} \sigma^{'}, f^{'})=  ( M, \sigma, f)\coprod (M^{'} \sigma^{'}, f^{'}), $$
and  turns   the equivalence  classes  into  an  abelian  group.

\begin{definition}
Let  $X$  be  a  proper  $G$-CW  complex and  a projective  unitary stable  $G$-equivariant bundle  $P$ over  $X$. We  will denote   by  $\kgeom{G}{X}{P}$ the abelian  group  of geometric  cycles  on  $X$, divided  by  the  equivalence  relation  generated  by  Isomorphism,  Bordism  and  $\Spin(c)$-modification. 

 Denote  by  $\kgeom{G,0}{X}{P}$
 the  subgroup   generated  by  those  cycles $(M,f, \sigma )$ for  which  each  component of  $M$  has  even  dimension. 
 Similarly,  denote  by  $\kgeom{G,1}{X}{P}$   the  subgroup  generated  by  cycles  $(M,f, \sigma)$  for  which  each  component  of $M$ has  odd  dimension.    

\end{definition}

We  introduce  now  a  version  for  $G$-CW  pairs. 

Given  a  proper $G$-CW  pair $(X,A)$ and  a $G$-equivariant, projective  unitary stable  bundle  $P$ over  $X$, consider  the  triples $(M, \sigma, f)$  consisting  of  
 \begin{itemize}
 \item  A $G$-compact, proper  manifold   $M$ (possibly  with  boundary). 
 \item  A  $G$-equivariant map  $f: M\to X$  with $f(\partial M)\subset A)$.

\item   A homotopy  class of  a  $G$-invariant  section  $\sigma:  M\to \Fred(-f^*(P))$  representing  an  element  on the twisted  equivariant  $K$-theory  group $$K_G^0(M, -f^*(P)),$$

\end{itemize}
 
Define  on  the   set  of  geometric cycles   for  $(X,A)$ and  $P$  an  equivalence  relation  generated   by 
\begin{itemize}
\item Isomorphism. 
\item Bordism.
\item  $\Spin(c)$-vector  bundle  modification. 
\end{itemize}

 Denote  by  $\kgeom{G,*}{X,A}{P}$ the  set  of  such  equivalence  classes. 
 
As  before,  disjoint  union  defines   an abelian    group structure  on    $\kgeom{G,*}{X,A}{P}$. Notice that the  decomposition 
 
$$\kgeom{G,*}{X,A}{P}= \kgeom{G,0}{X,A}{P}\bigoplus \kgeom{G,1}{X,A}{P}$$
holds,  where the  summands $0$ and $1$  correspond  to  the  subgroup  generated  by  geometric  cycles  for  $(X,A)$ and  $P$   given  by  manifolds  of  even, respectively  odd dimensions. 

There  exists  a  boundary  map 

$$\delta: \kgeom{G,j}{X,A}{P}\longrightarrow  \kgeom{G,j-1}{A}{P},$$
 which is defined  as  follows. 

Given a  geometric  cycle  $(M, \sigma, f)$ for  $(X,A)$ and  $P$, the  cycle  $\delta ( M,\sigma, f)$   has  as  underlying  manifold $\partial M$.  The  map    $\partial M \to A$  is  given  as  the composition $\partial M\overset{j}{\to} M \overset{f}{\to} X$,  where $j$  denotes  the  inclusion  of  $\partial  M$ into $M$. The  section  $\partial M \to \Fred(- f^*(P))$ is  given  as  the  composition  of  the maps $j:\partial  M \overset{j}{\to} M \overset{f}{\to} \Fred(-f ^*(P))\overset{\mid j}{\to} \Fred (-f^*(P))$, where  the  map $\mid j$  restricts a  bundle  with  fiber $\Fred(-f^*(P))$ over  $M$ to a  bundle  over  $\partial M$. 
 
We  introduce an equivalence  relation  on   geometric  cycles,  which  is  more  technically  convenient. This  is  a  generalization  of  several  constructions  appearing  in the  literature \cite{emersonmeyertriviality},  \cite{baumcareywang} under  the  name  of  normal  bordism.

\begin{definition}\label{defnormalvectorbundle}
Let  $M$  be a  proper,  $G$- compact Manifold  together  with a  $G$-equivariant  map $f: M \to X$. Let $TM$  be  the  tangent  bundle  
A normal  bundle  for  $TM$   with respect  to  $f$  is  a  real, $G$-equivariant  vector  bundle  $ NM\to M$ such  that $TM\oplus NM$  is  isomorphic  to  a  pullback $f^*(F)$, where  $F$  is  a $G$-equivariant, $\Spin(c)$-vector  bundle  over $X$.  
\end{definition}

The  following  result  concerns  the  existence  of  normal  bundles: 

\begin{lemma}\label{lemmaexistencenormalbundles}
Let  $X$  be a  finite,  proper  $G$-CW  complex. Let  $M$  be  a  proper,  $G$-compact manifold with  a $G$-equivariant  map $f:M\to X$ . Then, there  exists  a  normal  bundle   for  $TM$  with respect to  $f
$.   
\end{lemma}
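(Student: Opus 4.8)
The plan is to produce $NM$ by first finding an auxiliary (not necessarily $\Spin(c)$) bundle and then correcting the structure group. Fix a $G$-invariant Riemannian metric on $M$, which exists because the action is proper and $G$-compact (average a local metric against a $G$-invariant partition of unity subordinate to a cover by slices). With such a metric available, the whole problem reduces to the following assertion: there is a $G$-equivariant vector bundle $F_0 \to X$ together with a $G$-equivariant bundle monomorphism $\iota : TM \hookrightarrow f^*F_0$ covering the identity of $M$. Indeed, granting this, let $NM_0$ be the orthogonal complement of $\iota(TM)$ in $f^*F_0$ with respect to an invariant metric on $F_0$; since $\iota$ has locally constant rank its image is a subbundle, so $TM \oplus NM_0 \cong f^*F_0$. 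To upgrade $F_0$ to a $\Spin(c)$ bundle over $X$ I would replace it by its complexification $F := F_0 \otimes_{\IR} \IC$, whose underlying real $G$-bundle is $F_0 \oplus F_0$ and which carries a canonical complex, hence $G$-equivariant $\Spin(c)$, structure. Setting $NM := NM_0 \oplus f^*F_0$ one gets $TM \oplus NM \cong (TM \oplus NM_0) \oplus f^*F_0 \cong f^*F_0 \oplus f^*F_0 \cong f^*F$, as required.

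The heart of the argument is therefore the construction of $F_0$ and of $\iota$; this is the only step where the hypotheses that $X$ be finite and that $M$ be $G$-compact are genuinely used. Note first that equivariance of $f$ forces $G_m \subseteq G_{f(m)}$ for every $m \in M$, so that the isotropy representation of the finite group $G_m$ on the tangent space $T_mM$ is the restriction of a representation of $G_{f(m)}$. I would fix a $G$-equivariant smooth triangulation of $M$, making it a finite proper $G$-CW complex, and build $F_0$ by induction over the finitely many $G$-cells $G/K_\lambda \times D^{n_\lambda}$ of $X$. Over the orbit of each cell the bundle $F_0$ is modelled on an induced bundle $G \times_{K_\lambda}(D^{n_\lambda} \times \IC[K_\lambda]^{N})$; since the regular representation $\IC[K_\lambda]$ of the finite group $K_\lambda$ contains every irreducible $K_\lambda$-representation, for $N$ large its restriction to any subgroup contains any prescribed representation, in particular the tangent representations $T_mM$ occurring over that cell. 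Because $X$ has finitely many cells and $M$ has finitely many orbit types, only finitely many such constraints must be met simultaneously, and they can all be accommodated by choosing $N$ uniformly large. The monomorphism $\iota$ is then assembled cell by cell, using that over a disc the relevant bundles are trivial and that fibrewise-injective maps defined on a closed subcomplex extend, after possibly enlarging $N$, over the next family of cells.

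The main obstacle I expect is precisely this globalization: the representation-theoretic input is local to each cell, whereas $F_0$ and $\iota$ must be honest $G$-bundle data over all of $X$ and over all of $M$. Patching the cellwise models into a single bundle over $X$ is an obstruction-theoretic extension problem over the skeleta of $X$ — one must match the induced bundles along the attaching spheres $S^{n_\lambda - 1} \times G/K_\lambda$ and simultaneously keep $\iota$ fibrewise injective — and it is here that finiteness is indispensable. An alternative that streamlines this step is to compose $f$ with a classifying map $c : X \to \eub{G}$ for proper actions; since $M$ is $G$-compact, $c \circ f$ lands in a finite skeleton of $\eub{G}$, and a normal bundle for $c\circ f$ with $F$ a bundle over $\eub{G}$ pulls back, via $TM \oplus NM \cong (c\circ f)^*F \cong f^*(c^*F)$, to a normal bundle with respect to $f$, with $c^*F$ the desired $\Spin(c)$ bundle over $X$. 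This reduces the statement to the universal space, where a single sufficiently large induced bundle serves as $F_0$, at the cost of invoking the classification of equivariant bundles over finite proper $G$-CW complexes from \cite{barcenasespinozajoachimuribe} and the equivariant embedding techniques of \cite{baumhigsonschickproper}.
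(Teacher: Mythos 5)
Your outer reduction is sound and in places more careful than the paper itself: producing a $G$-equivariant monomorphism $\iota:TM\hookrightarrow f^*F_0$, taking the orthogonal complement with respect to an invariant metric, and then passing to the complexification $F_0\otimes_{\IR}\IC$ to secure an even-rank $G$-equivariant $\Spin(c)$ bundle over $X$ is exactly the right skeleton (the paper's own proof is a one-line citation of Lemma 3.7 of \cite{lueckoliverbundles}, applied after equipping $M$ with a finite proper $G$-CW structure, and it does not even address the $\Spin(c)$ upgrade, which your complexification handles cleanly). The problem is that the entire mathematical content of the lemma is concentrated in the step you defer: the existence of $F_0$ over $X$ and of $\iota$. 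That existence statement \emph{is} Lemma 3.7 of \cite{lueckoliverbundles} (every $G$-vector bundle over a finite proper $G$-CW complex is a direct summand of the pullback of a $G$-vector bundle on the target of a $G$-map), and your sketch of it does not close. Concretely: you induct over the cells of $X$, but $\iota$ must be defined fibrewise over $M$, and $f^{-1}$ of a cell of $X$ is not a subcomplex of $M$, so "assembling $\iota$ cell by cell" over $X$ has no inductive meaning; the actual argument has to combine (a) a nontrivial existence result for $G$-bundles over a proper $G$-CW complex $X$ whose fibre over a prescribed orbit $G/H$ contains a prescribed $H$-representation — this is where the stability/induced-bundle input lives and it is not automatic, since an infinite discrete $G$ has no useful finite-dimensional representations to induce from globally — with (b) an extension argument over the skeleta of $M$ (not of $X$) for the fibrewise-injective map. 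You yourself flag this globalization as "the main obstacle I expect," which is an accurate self-diagnosis: as written it is an acknowledged gap, not a proof.

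The proposed escape route through $\eub{G}$ does not remove the difficulty. Since $M$ is $G$-compact, $c\circ f(M)$ is indeed contained in a finite proper $G$-CW subcomplex of $\eub{G}$, but that puts you back in exactly the situation you started from — a finite proper $G$-CW target with varying finite isotropy — and there is no "single sufficiently large induced bundle" over $\eub{G}$ to induce from, because there is no single finite subgroup controlling all isotropy. The honest fix is simply to do what the paper does: quote Lemma 3.7 of \cite{lueckoliverbundles} for the existence of $F_0$ with $TM$ a direct summand of $f^*F_0$, and then append your complexification argument to meet the $\Spin(c)$ requirement of Definition \ref{defnormalvectorbundle}; that combination would be a complete and slightly more detailed proof than the one in the paper.
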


\begin{proof}

This  follows  from  Lemma 3.7 in  page  599  of  \cite{lueckoliverbundles}, after  noticing  that  the  manifold $M$ has  a  proper  $G$-CW structure, which  consists of  finitely  many  cells. 
   
\end{proof}

\begin{definition}\label{defgeneralizedbordism}[Normal  Bordism  for  cycles]
Two  cycles 
$$(M,f,  \sigma),$$ and  
$$(M^{'}, f^{'},  \sigma{'})$$   
for  $\kgeom{G,*}{X,A}{ P}$
are said  to  be  normally  bordant if  there  exist  finite  dimensional,  real orientable vector  bundles for  $TM$  with   respect  to  $f$,  denoted  by $E\to M$,  and  a  normal  bundle  for  $T M^{'}$  with  respect  to  $f^ {'}$ denoted  by   $E^{'}\to M^{'}$,

such  that  the  cycles

$$\big (E\bigstar M,\pi_{E}\circ  f,  S_{M, E}!(\sigma)\big )$$
 
and  
 
 $$ \big (E^{'}\bigstar M^{'},\pi_{E^{'}}\circ  f^{'},   S_{M^{'},E^{'}}!(\sigma^{'}) \big )  $$

are  bordant.

\end{definition}

\begin{lemma}
Two cycles  are  normal bordant  if  and  only  if  they  are  equivalent. 

\end{lemma}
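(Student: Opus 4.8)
The plan is to prove that the relation of normal bordism, which I denote $\sim_{n}$, coincides with the equivalence relation $\sim$ generated by isomorphism, bordism and $\Spin(c)$-vector bundle modification, by establishing the two inclusions separately. The inclusion $\sim_{n}\subseteq\sim$ is the direct one. Suppose $(M,f,\sigma)$ and $(M',f',\sigma')$ are normally bordant, witnessed by normal bundles $E\to M$ and $E'\to M'$ as in Definition \ref{defgeneralizedbordism}. Since the pushforward $S_{M,E}!$ is defined through Theorem \ref{theopushforward}, applied to the $1$-section whose normal bundle is $E$, the bundle $E$ is forced to be of even rank and $\Spin(c)$ (because $TM\oplus S_{M,E}^{*}T(E\bigstar M)\cong TM\oplus TM\oplus E$ must be even-dimensional and $\Spin(c)$), and likewise for $E'$. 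Hence $\Spin(c)$-vector bundle modification applies verbatim and gives
$$(M,f,\sigma)\sim (E\bigstar M,\pi_{E}\circ f, S_{M,E}!(\sigma)),\qquad (M',f',\sigma')\sim(E'\bigstar M',\pi_{E'}\circ f', S_{M',E'}!(\sigma')).$$
By hypothesis the two right-hand cycles are bordant, hence equivalent, and transitivity of $\sim$ yields $(M,f,\sigma)\sim(M',f',\sigma')$.

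For the converse inclusion $\sim\subseteq\sim_{n}$ I would show that $\sim_{n}$ is itself an equivalence relation containing each of the three generating relations; as $\sim$ is the smallest such relation, this forces the inclusion. Reflexivity and symmetry are immediate: a normal bundle exists by Lemma \ref{lemmaexistencenormalbundles}, a cycle is normally bordant to itself through a cylinder bordism, and the defining condition in Definition \ref{defgeneralizedbordism} is symmetric once one reverses the orientation of the bordism. That $\sim_{n}$ contains isomorphism and bordism follows from a single mechanism: given a bordism $W$ of cycles (or the cylinder realizing an isomorphism), Lemma \ref{lemmaexistencenormalbundles} provides a normal bundle $\mathcal{E}\to W$, and the modification $\mathcal{E}\bigstar W$ is a bordism between the modifications of its two ends, because modification commutes with passing to the boundary and the restrictions of $\mathcal{E}$ to the ends are again normal bundles.

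The fact that $\sim_{n}$ contains $\Spin(c)$-vector bundle modification is where the Composition Lemma \ref{lemmanormalbordism} enters. Let $E\to M$ be an even-dimensional $\Spin(c)$-bundle and choose, by Lemma \ref{lemmaexistencenormalbundles}, a normal bundle $W\to E\bigstar M$ for $T(E\bigstar M)$ with respect to $\pi_{E}\circ f$. Pulling back along the $1$-section $s=S_{M,E}$ and using $T(E\bigstar M)\oplus\IR\cong\pi_{E}^{*}(TM\oplus E\oplus\IR)$, one checks that $s^{*}W\oplus E$ is a normal bundle for $TM$ with respect to $f$. The Composition Lemma \ref{lemmanormalbordism} then produces a bordism between $W\bigstar(E\bigstar M)$ and $(s^{*}W\oplus E)\bigstar M$, which is exactly a normal bordism between $(M,f,\sigma)$ and $(E\bigstar M,\pi_{E}\circ f,S_{M,E}!(\sigma))$. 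Here each modified manifold carries the corresponding iterated pushforward of $\sigma$, and the two pushforward sections are matched across the bordism by functoriality of the pushforward of Theorem \ref{theopushforward} (and \cite{barcenascarrillovelasquez}). This upgrading of Lemma \ref{lemmanormalbordism} from a statement about manifolds to one about \emph{cycles} is what keeps the argument non-circular, since it never invokes the generated relation $\sim$.

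The main obstacle is transitivity. Suppose $(M_{0},f_{0},\sigma_{0})\sim_{n}(M_{1},f_{1},\sigma_{1})$ via normal bundles $A\to M_{0}$, $B\to M_{1}$, and $(M_{1},f_{1},\sigma_{1})\sim_{n}(M_{2},f_{2},\sigma_{2})$ via $B'\to M_{1}$, $C\to M_{2}$; the difficulty is that the common cycle $M_{1}$ is modified by two different bundles $B$ and $B'$. To reconcile them I would modify the bordant cycles $B\bigstar M_{1}$ and $B'\bigstar M_{1}$ a second time, by $\pi_{B}^{*}B'$ and $\pi_{B'}^{*}B$ respectively; the Composition Lemma \ref{lemmanormalbordism} identifies both results, up to bordism of cycles, with $(B\oplus B')\bigstar M_{1}$ (the two orderings of the summands differing only by an isomorphism of cycles). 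Propagating these further modifications through the two given bordisms, again as modifications of cycles with sections transported by pushforward functoriality, turns the chain into
$$\hat{A}\bigstar M_{0}\ \text{bordant to}\ (B\oplus B')\bigstar M_{1}\ \text{bordant to}\ \hat{C}\bigstar M_{2},$$
where $\hat{A}$ and $\hat{C}$ are the normal bundles for $M_{0}$ and $M_{2}$ produced exactly as in the previous paragraph. Since bordism of cycles is transitive, $\hat{A}\bigstar M_{0}$ and $\hat{C}\bigstar M_{2}$ are bordant, i.e. $(M_{0},f_{0},\sigma_{0})\sim_{n}(M_{2},f_{2},\sigma_{2})$. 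The points on which I would spend the most care are the coherent extension of each auxiliary modification over the connecting bordisms, which I would arrange by taking the auxiliary bundles to be pulled back along the maps to $X$ so that they automatically extend over any bordism lying over $X$, and the verification that the iterated pushforwards of the sections agree on the overlaps.
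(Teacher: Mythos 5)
Your argument is correct and follows essentially the same route as the paper's proof: the forward implication is read off from the definitions, and the converse reduces to checking that each generating relation (isomorphism, bordism, vector bundle modification) implies normal bordism, using Lemma \ref{lemmaexistencenormalbundles} to put a normal bundle on a bordism whose restriction to the boundary is again normal, and the Composition Lemma \ref{lemmanormalbordism} to absorb a vector bundle modification into a normal bordism. The only substantive difference is that you explicitly verify transitivity of normal bordism by reconciling the two normal bundles $B$ and $B'$ on the middle cycle via $B\oplus B'$ --- a point the paper's proof leaves implicit --- so your version is, if anything, more complete.
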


\begin{proof}
The  implication from  normal  bordism  to  equivalence in the  sense  of definition \ref{defgeomk} is  clear. 

For  the converse, suppose  that  $(W,f,  \sigma)$  is  a  bordism   from  the  cycle $(M_0, f_0, \sigma_0)$  to  $(M_1, f_1,  \sigma_1)$.
 Choose a  normal  bundle $NW$  for  $TW$  with respect  to  $f$  in a  way  that $NW\mid_{\partial W}$ is a  normal  bundle  for   the  boundary. Then  $NW\bigstar W$  gives  a  normal  bordism   between  $\Spin(c)$-modifications  along  normal  bundles  for  the  boundary  components.

The  vector  bundle modification  procedure  also  implies normal   bordism. Let $E \to  M$  be  a real, $\Spin(c)$,  $G$-equivariant vector bundle.  Let  $F\to S(E\oplus \IR)$ be  a $\Spin(c)$, $G$-equivariant  real  vector  bundle, and  let  $N$  be  a  normal  bundle  with  respect  to the  map  $S(E\oplus  \IR)\to M \to X$ . 
 
Consider  the map  $s:M\to  F$  given as  the  composition   of  the $1$-section $s: M\to S(E\oplus  \IR)$,  and  the  $0$-section $S(E\oplus \IR)\to F$.

Now,  consider  the  bundle $s^*(N)\oplus  F \mid M $, and  notice  that  it  is a  normal  bundle   for  $M$ with  respect   to  the  map $f$.

 After  lemma \ref{lemmanormalbordism}, these total  spaces  are  bordant,  proper  $G$- $\Spin(c)$-manifolds.  The  bordism  determines a  bordism  of  geometric cycles.  
\end{proof}

We  now  prove  the  usual  homological  properties  of  twisted  equivariant  $K$-homology  for  proper  actions  of  a  discrete  group.

\begin{theorem}[Homological  Properties] \label{theoremhomologicproperties}
The   groups  $\kgeom{G,j}{X,A}{P}$ satisfy the  following  properties. 
\begin{enumerate}
\item (Functoriality). A $G$-equivariant map  of  pairs $\psi:(X,A)\to (Y,B)$ induces  a  group  homomorphism $ \psi^*:  \kgeom{G,j}{X,A}{\psi^* P} \to \kgeom{G,j}{Y,B}{P} $
\item (Homotopy  invariance). Two $G$-equivariantly  homotopic maps  
$\psi_0:(X,A)\to (Y,B)$  and  $\psi_1:(X,A)\to (Y,B)$ induce  the  same   group homomorphism $\psi:=\psi_0= \psi_1: \kgeom{G,j}{X,A}{P}\to \kgeom{G,j}{Y,B}{\psi^*(P)}$. 

\item (Six term  exact  sequence).  For  a  proper  $G$-CW  pair  $(X,A)$, the  following   sequence  is  exact: 

$$\xymatrix{ \kgeom{G,0}{X,A}{P} \ar[r]_-{\delta} & \kgeom{G,1}{A}{P} \ar[d]_-{i^ *} \\ \kgeom{G,0}{X}{P}  \ar[u]_-{j_*} & \kgeom{G,1}{X}{P} \ar[d]_-{j_*} \\ \kgeom{G,0}{A}{i^*P} \ar[u]_-{i_*} &  \kgeom{G,1}{X,A}{P} \ar[l]_-{\delta}  }, $$
where the  maps  $i_*$, $j_*$  are  induced  by  the  maps  of  proper  $G$-CW  pairs   $i: A\to X$ and  $j: X:= (X,\emptyset)\to (X,A)$  .   

\item  (Excision).  Given  a $G$-CW  pair $(X,A)$ and  a  $G$-invariant,  open  set $U$  with  the  property  that  the  closure of  $U$  is  contained  in  the  interior  of $A$,  the  inclusion $l:(X-U, A-U)\to (X,A)$  induces an  isomorphism  of  abelian  groups  
$$\kgeom{G,j}{X,A}{P} \to\kgeom{G,j}{X-U,A-U}{l^{*}(P)}   $$ 

\item(Disjoint  union   Axiom). Let $X=\underset{\alpha\in A} {\coprod} X_\alpha$  be  a proper  $G$-CW  complex  which  is  obtained  as  the  disjoint union  of  the  Proper $G$-CW  complexes  $X_\alpha$.  Then,  the inclusions $X_\alpha\to  X$ induce  an  isomorphism  of abelian  groups  

$$\bigoplus_{\alpha \in A }\kgeom{G,j}{X_\alpha}{P\mid X_\alpha}\cong  \kgeom{G,j}{X}{P} $$

\end{enumerate}
\end{theorem}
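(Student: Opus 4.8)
The plan is to verify the five axioms by direct geometric constructions on cycles, using the pushforward of Theorem~\ref{theopushforward} and the equivalence of bordism with normal bordism established above. Functoriality and homotopy invariance are the most direct. For a $G$-map $\psi$ one sends $(M,f,\sigma)$ to $(M,\psi\circ f,\sigma)$, noting that $f^*(\psi^*P)=(\psi\circ f)^*P$ so that $\sigma$ already represents a class in $K_G^0(M,-(\psi\circ f)^*P)$; post-composition visibly carries isomorphisms, bordisms, and $\Spin(c)$-modifications to data of the same type, so the induced map is a well-defined homomorphism. For homotopy invariance, a $G$-homotopy $H$ from $\psi_0$ to $\psi_1$ gives, for each cycle $(M,f,\sigma)$, the cylinder $(M\times[0,1],\,H\circ(f\times\id),\,\mathrm{pr}^*\sigma)$ carrying the interpolating twist $H^*P$, whose two boundary components are the images of $(M,f,\sigma)$ under $\psi_0$ and $\psi_1$; since bordant cycles are equal, the two induced maps coincide. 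The disjoint union axiom follows from $G$-compactness of the source: for any cycle over $X=\coprod_\alpha X_\alpha$ the image $f(M)$ meets only finitely many summands, so $M$ splits as the finite disjoint union of the clopen $G$-submanifolds $f^{-1}(X_\alpha)$, giving the decomposition, with inverse induced by the inclusions.

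\textbf{Six-term sequence.} The core of the theorem is exactness of the six-term sequence, which I would establish by separately checking that consecutive composites vanish and that the reverse inclusions hold, in the style of Baum--Higson--Schick \cite{baumhigsonschickproper} and \cite{baumcareywang}. The composites vanish through explicit fillings: $i_*\circ\delta=0$ because a relative cycle $(M,f,\sigma)$ is itself a bordism over $X$ of its own boundary; $\delta\circ j_*=0$ because a closed cycle has empty boundary; and $j_*\circ i_*=0$ via the cylinder $M\times[0,1]$ mapped constantly into $A\subseteq X$, a relative null-bordism whose free face lies in $A$. For the reverse inclusions, exactness at the $A$-term is immediate, since a closed cycle over $A$ that bounds some $(W,F,\Sigma)$ over $X$ is exactly the $\delta$-image of $(W,F,\Sigma)$ regarded as a relative cycle, its boundary already lying in $A$. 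Exactness at the relative term follows by gluing: if $\delta(M,f,\sigma)$ bounds a cycle $(V,g,\tau)$ inside $A$, then $M\cup_{\partial M}V$ is a closed cycle over $X$ whose $j_*$-image is $[(M,f,\sigma)]$. The genuinely substantive case is exactness at $\kgeom{G,j}{X}{P}$: a closed cycle becoming null relative to $(X,A)$ bounds some $(W,F,\Sigma)$ whose complementary boundary face $N$ maps into $A$, and the resulting bordism over $X$ identifies $[(M,f,\sigma)]$ with $i_*[(N,F|_N,\Sigma|_N)]$.

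\textbf{Excision.} For excision I would use that $\overline U$ is cushioned inside $\operatorname{int}A$ to separate it by an equivariant function $\phi\colon X\to[0,1]$ equal to $1$ on $\overline U$ and to $0$ off $\operatorname{int}A$. Given a relative cycle $(M,f,\sigma)$, I make $\phi\circ f$ smooth, pick a regular value $c\in(0,1)$, and cut $M$ along $N=(\phi\circ f)^{-1}(c)$: the piece $M_+$ over $\{\phi\ge c\}$ maps entirely into $A$, while $M_-$ avoids $U$ and is a cycle for $(X\setminus U,A\setminus U)$. The cylinder on $M$, with $M_+\times\{1\}$ folded into the free face that maps into $A$, is a relative bordism from $(M,f,\sigma)$ to $(M_-,f|_{M_-},\sigma|_{M_-})$, giving surjectivity; applying the same cut to a relative bordism over $(X,A)$ gives injectivity, so the map induced by $l$ is an isomorphism.

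\textbf{Main obstacle.} The main obstacle throughout is not the manifold topology but the coherent transport of the twisting section $\sigma$: every cut, cylinder, gluing, and transversality move must be matched by a compatible operation on the $G$-invariant Fredholm sections representing classes in $K_G^0(M,-f^*P)$, and reconciling boundaries after $\Spin(c)$-modification requires normal bundles that restrict correctly to faces and collars. It is precisely here that the equivalence of ordinary and normal bordism proved above, the existence of normal bundles over finite proper $G$-CW complexes (Lemma~\ref{lemmaexistencenormalbundles}), and the composition Lemma~\ref{lemmanormalbordism} do the work of making the reglued sections and pushforwards well-defined. I expect the delicate point to be verifying that the pushforwards $S_{M,E}!$ of sections agree along overlapping collars and under the gluings used for the exact sequence --- a compatibility of the pushforward construction of Theorem~\ref{theopushforward} with restriction --- rather than any single step of the underlying bordism theory.
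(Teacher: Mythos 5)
Your proposal is correct and follows essentially the same route as the paper: cycle-level constructions for functoriality and homotopy invariance, boundary gluings mediated by the normal-bordism lemma and the extension of normal bundles over $M$ for the six-term sequence, and cutting $M$ along a regular level set of a $G$-invariant separating function for excision. The only cosmetic differences are that the paper produces that separating function via equivariant Morse theory on $M$ (Hingston) rather than by smoothing the pullback of an equivariant Urysohn function from $X$, and that it verifies exactness explicitly at only one vertex of the hexagon, declaring the others analogous, whereas you sketch all of them.
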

\begin{proof}
\begin{enumerate}
\item Associate  to  the  geometric  cycle $(M, \sigma, f)$ for  the  pair $(X,A)$  the cycle  $(M, \psi^*(\sigma) , \psi\circ f)$. 
     
\item It  follows from  the   bordism  relation. 

\item We  will  check  exactness  of  the  sequence  at $\kgeom{G,0}{X}{P}$.  Let $(M, f, \sigma )$  be  a   geometric  cycle  for  the  pair  $(X,A)$ which  maps  to  $0$  under  the  map $\delta$.  According  to  Lemma  \ref{lemmanormalbordism},  there  exists  a real,   orientable and  $\Spin(c)$ $G$-vector  bundle $E$ of  even dimension  over  $\partial M$  such  that the  geometric  cycle $E\bigstar (\partial M, f\mid_{\partial M}, \sigma\mid_{\partial M})$ is  twisted   null bordant.  We  can  assume  that  $E$  is  defined  on  all  of  $M$,  otherwise  using  lemma  3.7  of \cite{lueckoliverbundles}, page 599. 

Consider  a  geometric cycle $(W,F,\Sigma)$,  where  $W$  is  such that $\partial W = E \bigstar \partial M $, $F\mid_{\partial W}= \pi_{E}\circ  f$,   $ \Sigma \mid_{\partial W}= \sigma\mid_{\partial M}$. 

Form  the  manifold $  \hat{W}:=W\cup_{\partial W= \partial M } E\bigstar M $, where the  inclusion  of  $\partial  M$  into  the  second  factor  is  given as  composition  of  the  inclusion $\partial M \to  M$ together  with  the  unit  section $M\to  E\bigstar M$. 
The manifold  $\hat{W}$ together  with the  map $F\cup_{\partial M} f$,  and  the  class $\hat{\Sigma}= \Sigma \cup_{\partial M}  \sigma$  give  a geometric  cycle  in $\kgeom{G, *}{X}{P}$ which maps   
 to  the   given  cycle    for  $(X,A)$. 

Exactness  at  other  points  of  the  sequence  is  verified analogously.

\item We  will  construct  an inverse  to  the  map  induced  by  the  inclusion  $l$.
Recall the  extension  of Morse  Theory  to  proper  actions  \cite{Hingstonequivariantmorsetheory}, Theorems A and  B in page  94.  It  can  be  concluded  that  given a  geometric  cycle  $(M, f, \sigma) $ for  $(X,A)$,  there  exists a smooth $G$-invariant  function $\epsilon :M\to  \IR$ with  a   number  of  critical  orbits, which  are  all generic. 

We  will  assume  that the   function  $\epsilon$ separates  $f^{-1}(  \overline{U})$ and $M - f^{-1}(A)$. Then,  there    exist real numbers  $a<b$  such  that  $a= \sup_{x\in M-f^{-1}(A) }\epsilon (x)$,  and $b= \inf_{x\in f^{-1}(\overline{U})} \epsilon (x)$. 

For  a   regular  value  $c\in (a,b)$,   $\epsilon^{c}= \epsilon^{-1}(-\infty, c])$  can  be  furnished  with a  smooth  structure. 
The  boundary  component $\epsilon^{-1}(c)\cup \epsilon^{-1}(-\infty, c])\cap  \partial  M$ is  mapped  under $f$  to  $X-U$. 
  
Then, the  restriction of  the  cycle  to $M_1$  gives  a   geometric  cycle  $$(M_1, f\mid_{M_{1}},  \sigma\mid_{M_1})$$  for  the  pair $(X-U, A-U)$.  This  construction  gives  an  inverse  to  the map $l_*$.

\item This   follows easily  from  manipulations on  the  components   of  the manifolds involved  in the  geometric  cocycles.

\end{enumerate}

\end{proof}

Given  discrete  groups  $H$ and $G$, a  group  homomorphism $\alpha: H\to G$, and a proper   $H$-CW  pair $(X,A)$, the  induced space ${\rm ind}_{\alpha}X,$ is defined  to be  the  $G$-CW complex  defined  as  the  quotient space  obtained  from $G\times X $ by  the  right  $H$-action  given  by $(g,x)\cdot h =( g\alpha(h),h^{-1}x)$. The  following  result  summarizes  the  behaviour   of  the geometric  model  for  twisted  equivariant  $K$-homology   with  respect  to  induction    and  restriction procedures  on  spaces. 
Recall  the  existence  of  the   induction  homomorphisms  in Borel  cohomology  ${\rm ind}_{\alpha}:H^3( {\rm ind}_{\alpha} X\times_G EG, \IZ) \to   H^3(X\times_H EH, \IZ)$ and  in Twisted  Equivariant  $K$-Theory $ {\rm ind}_{\alpha}:  K_{G}^*({\rm ind}_{\alpha} X, {\rm  ind }_{\alpha }P) \to  K_{H}^*(X, P)$ \cite{barcenasespinozajoachimuribe}, (Lemma 5.4  in page 1347).

\begin{theorem}\label{lemmainduction}
There  exist  natural  group homomorphisms
$$ {\rm ind}_{\alpha}:K^{{\rm geo}}_{H, j}(X,A)\longrightarrow K^{{\rm geo}}_{G, j}({\rm ind}_{\alpha}(X,A), {\rm ind}_{\alpha }P)$$ 
associated  with a homomorphism $\alpha:H\to G$   which satisfy  the  following  conditions
\begin{enumerate}
\item{$\ind_{\alpha}$ is an  isomorphism whenever $\ker \alpha$ acts  freely  on $X$.}
 \item{For  any  $j$, $\partial_{n}^{G}\circ {\rm ind}_{\alpha}= {\rm ind}_{\alpha}\circ \partial_{n}^{H}$.}
 \item{For any  group  homomorphism $\beta: G\to K$ such  that $\ker \beta\circ \alpha$ acts freely on  $X$,  one  has 
$${\rm  ind}_{\alpha\circ \beta}= K^{{\rm geo}}_{K, j}(f_{1}\circ {\rm ind}_{\beta}\circ{\rm ind}_{\alpha}):K^{{\rm geo}}_{H, j}(X,A)\to K^{{\rm geo}}_{K, n}({\rm ind}_{\beta\circ \alpha}(X,A))$$
where $f_{1}: {\rm ind}_{\beta}{\rm ind}_{\alpha}\to {\rm ind}_{\beta\circ\alpha}$ is  the  canonical $G$-homeomorphism.}                                      

\item{For  any $j\in \mathbb{Z}$, any  $g\in G$, the  homomorphism 

$${\rm ind}_{c_(g):G\to G}K^{{\rm geo}}_{G, j}(X,A)\to K^{{\rm geo}}_{G, j}(\rm ind)_{c(g):G\to G}(X,A))$$

agrees  with  the  map  $\mathcal{H}_{n}^{G}(f_{2})$, where  $f_{2}: (X,A)\to {\rm ind}_{c(g):G\to G}(X,A)$ sends $x$  to  $(1,g^{-1}x)$ and $c(g)$  is  the  conjugation  isomorphism in $G$.}
\end{enumerate}

\end{theorem}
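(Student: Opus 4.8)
The plan is to build the induction homomorphism directly on geometric cycles and then verify the four listed properties. Given a geometric cycle $(M,f,\sigma)$ representing a class in $K^{\mathrm{geo}}_{H,j}(X,A)$, I would set $\ind_{\alpha}M := G\times_{H}M$, the quotient of $G\times M$ by the right $H$-action $(g,m)\cdot h=(g\alpha(h),h^{-1}m)$, with $G$ acting by left translation on the first factor. A first block of routine checks shows this preserves all the structure required of a cycle: the stabilizer of $[g,m]$ is the finite group $g\alpha(H_{m})g^{-1}$, so the $G$-action is proper; one has $G\backslash(G\times_{H}M)=H\backslash M$, so $G$-compactness is preserved; and since $G$ is discrete, $T(\ind_{\alpha}M)=G\times_{H}TM$, so the orientation and $\Spin(c)$-structure on $TM$ induce corresponding $G$-equivariant structures of the same even rank. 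The map $f$ induces $\ind_{\alpha}f:\ind_{\alpha}M\to\ind_{\alpha}X$, and the section $\sigma$ induces a $G$-invariant section of $\Fred(-(\ind_{\alpha}f)^{*}\ind_{\alpha}P)$ over $\ind_{\alpha}M$ by the rule $[g,m]\mapsto g\cdot\sigma(m)$, well defined by $H$-invariance of $\sigma$; its class is exactly the one corresponding to $[\sigma]$ under the induction isomorphism $K^{0}_{G}(\ind_{\alpha}M,-(\ind_{\alpha}f)^{*}\ind_{\alpha}P)\cong K^{0}_{H}(M,-f^{*}P)$ in twisted equivariant $K$-theory recalled before the statement (Lemma 5.4 of \cite{barcenasespinozajoachimuribe}).

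Next I would check that $\ind_{\alpha}$ descends to equivalence classes. Isomorphism is immediate from functoriality of $G\times_{H}(-)$. For bordism, the key point is that induction commutes with passage to the boundary, $\partial(G\times_{H}W)=G\times_{H}\partial W$, so the induction of a bordism is again a bordism, the induced section restricting correctly on the two boundary pieces. For $\Spin(c)$-vector bundle modification the essential input is that induction commutes with the sphere-bundle construction, $\ind_{\alpha}S(E\oplus\IR)=S(\ind_{\alpha}E\oplus\IR)$, together with the naturality of the pushforward homomorphism of Theorem \ref{theopushforward} under induction, namely $\ind_{\alpha}\circ S_{M,E}!=S_{\ind_{\alpha}M,\ind_{\alpha}E}!\circ\ind_{\alpha}$. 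With well-definedness in hand, additivity under disjoint union promotes $\ind_{\alpha}$ to the group homomorphism of the statement; property (ii), commutation with $\delta$, then follows from $\partial(\ind_{\alpha}M)=\ind_{\alpha}(\partial M)$ and the compatibility of the induced section with restriction to the boundary.

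For property (i), when $\ker\alpha$ acts freely on $X$ the right $H$-action on $G\times X$ is free, and I would produce an inverse by a restriction procedure: pulling a $G$-cycle over $\ind_{\alpha}X=G\times_{H}X$ back along the embedding $X\hookrightarrow G\times_{H}X$, $x\mapsto[1,x]$, yields an $H$-cycle over $X$, and freeness guarantees that restriction-after-induction and induction-after-restriction are each the identity on equivalence classes. Properties (iii) and (iv) are then formal consequences of the functoriality of $G\times_{H}(-)$: for (iii) the canonical $K$-homeomorphism $K\times_{G}(G\times_{H}M)\cong K\times_{H}M$ attached to $\beta\circ\alpha$ supplies $f_{1}$, and for (iv) the inner automorphism $c(g)$ produces the explicit homeomorphism $f_{2}$ realizing the triviality of conjugation, exactly as in the corresponding statement for the $K$-theory induction.

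The main obstacle I anticipate is the compatibility of induction with vector bundle modification, i.e. the identity $\ind_{\alpha}\circ S_{M,E}!=S_{\ind_{\alpha}M,\ind_{\alpha}E}!\circ\ind_{\alpha}$. Since the pushforward of Theorem \ref{theopushforward} is not elementary, being assembled from a Thom isomorphism and a deformation index map in twisted equivariant $K$-theory, one must trace through that construction and verify that the induced Thom class agrees with the Thom class of the induced bundle and that the deformation index map commutes with induction. A secondary difficulty is the inverse in (i): one must confirm that the geometric restriction is compatible with the equivalence relations, in particular with normal bordism as in Lemma \ref{lemmanormalbordism}, so that it is a genuine two-sided inverse on $K^{\mathrm{geo}}_{*}$ rather than merely on cycles.
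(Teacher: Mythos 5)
Your construction of the induced cycle $({\rm ind}_{\alpha}M,\,{\rm ind}_{\alpha}f,\,[g,m]\mapsto g\cdot\sigma(m))$ on $G\times_{H}M$ is exactly the one the paper uses, and the paper's proof likewise consists of this definition followed by the assertion that it is compatible with the equivalence relations and satisfies (i)--(iv). Your proposal takes essentially the same approach, merely filling in the verifications (properness, behaviour under bordism and vector bundle modification, the restriction inverse for (i), and the commutation of induction with the pushforward) that the paper leaves implicit.
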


\begin{proof}
 
Given  a  geometric  cycle  for  $(M, f, \sigma) $  for $\kgeom{H}{X}{P}$,  form  the  cycle

 $$({\rm ind}_{\alpha}M,{\rm ind}_{\alpha}f, \Sigma ),$$
 where  $\Sigma: {\rm  ind}_{\alpha}X \to  \Fred(P)$  is  the  map determined  by  $(g,x) \mapsto g \sigma(x)$.   
  This  is  compatible  with the  equivalence  relations  defining  the  geometric  $K$-homology  groups.  The  homomorphism  is  seen to  satisfy  i-iv. 

\end{proof}

\subsection{The  spectral  sequence}

In order  to  proof  the  coincidence  of  geometric twisted equivariant  $K$-homology,  we  will  introduce  a  spectral  sequence  based  on  $G$-equivariantly  contractible  covers.  This  is  the  dual (homological instead  of  cohomological)  counterpart  of  the  spectral  sequence  constructed  in \cite{barcenasespinozauribevelasquez}  for   twisted  equivariant  $K$-theory. Due  to  the  poor  properties  of \v{C}ech   versions  of  homology (particularly  its  failure to satisfy exactness and  disjoint union  axiom  in  general),  we  will  make  the  following  assumption on the  space  $X$. 

\begin{condition}\label{conditionCMP}[Condition CMP]
The    $G$-space   $X$  is  said  to  satisfy  condition CMP  if $X$ is   a  Compact, Metrizable Proper  $G$-absolute  neighbourhood  retract (ANR).
\end{condition}  

These  conditions  are  certainly  met  in the  case  of  smooth,  proper  and  compact  $G$-manifolds with  isometric  actions, as  well as spaces  with a  proper  $G$-CW  structure with  a  finite  number  of  cells.

The  input  for the  spectral sequence  is  given  in terms  of  Bredon  homology  with  local  coefficients  associated  to  a contractible cover. We  will  introduce  some  notation relevant  to  this.

A detailed  description  of  the  homological  algebra  involved  in  Bredon  cohomology,  as  well  as  computations  related  to  twisted  equivariant  $K$-theory  and  Bredon  cohomology  can  be  found  in  \cite{barcenasvelasquez}, \cite{barcenasespinozauribevelasquez}.

Let  $\calu= \{ U_\sigma \mid  \sigma \in  I \}$  be  an   open  cover  of  the  proper $G$-space $X$ satisfying  hypothesis \ref{conditionCMP}. Assume  that each  open  set  $U_\sigma$   is   $G$-equivariantly homotopic  to  an  orbit $G/H_\sigma\subset U_\sigma$  for a  finite  subgroup  $H_\sigma$,  and   each   finite  intersection is  either  empty    or  $G$-equivariantly  homotopic  to  an  orbit. 

The  existence  of  such  a  cover,  sometimes  known  as  \emph  {contractible  slice  cover},   is guaranteed  for  proper  $G$-ANR's  by an appropriate  version  of  the  Slice  Theorem (see \cite{antonyan}). 

Let $\cover$ be  the category  where  the  objects are  non-empty  finite  intersections  of $\mathcal{U}$,  and  where  a  morphism   is  an  inclusion. 

\begin{definition}

A  covariant  system   with values  on  $R$-Modules   is  a  covariant    functor  $\cover\to  R-{\rm Mod}$. 
\end{definition}

\begin{definition}

  The nerve  associated  to  the  cover $\mathcal{U}$, denoted by  $\cover^*$  is  the  simplicial  set where  the  set  of  $0$-simplices $\cover^0$ is  given  by   elements  of  the  cover $\mathcal{U}$. 
   The  $1$ -simplices  in  $\cover^{1}$ are  non-empty    intersections $U_{\sigma_{1}}\cap U_{\sigma_{2}}$ an  in  general,  for  any  natural  number $n$,  the  $n$-simplices, $\cover^{n}$   are  non-empty  intersections  of  elements  in the  cover  $\mathcal{U}$  of  length $n+1$, $U_{\sigma_1}\cap \ldots \cap U_{\sigma_2}$. 

There are    face $\delta_i: \cover^{i}\to \cover^{i-1}$ and  degeneracy  maps $s_i: \cover^{i-1}\to \cover ^i$ coming  from  the associativity, respectively  idempotence of intersection.  

\end{definition}

\begin{definition}
Let  $X$  be  a  proper  $G$-space satisfying  condition \ref{conditionCMP}.   Given  a  contractible slice  cover  $\mathcal{U}$, and  a  covariant  coefficient  system $M$, the Bredon  equivariant  homology  groups  with  respect  to $\mathcal{U}$, denoted  by $H_{n}^G(X,\mathcal{U}; M),$ are  the  homology  groups  of  the  chain  complex defined  as  the  $R$-module 
$$ C_*( \cover, M) $$ 

Defined  as  follows. On  a  given  degree $n$,  $C_n(\cover, M)$  is  the  $R$-module  
$$\underset{U_{\sigma}= U_{\sigma_{1}} \cap \ldots \cap U_{\sigma_n} \in \cover^{n}}\bigoplus M(U_\sigma)\otimes_R  R[U_\sigma],  $$
where $R[U_\sigma]$ denotes  a   free $R$ module  of  rank  one and  $U_\sigma $  denotes a  non empty  length  $n$ intersection. 

The boundary  maps  of  the  simplicial  set $\cover$ tensored    with  the  $R$-module  homomorphisms  given  by  the   functoriality  of $M$   give   the  differential 

\begin{multline*}
d_n:  \underset{U_{\sigma}= U_{\sigma_{1}} \cap \ldots \cap U_{\sigma_{n+1}} \in \cover^{n}}\bigoplus M(U_\sigma)\otimes_R  R[U_\sigma] \\ \longrightarrow \underset{U_{\tau}= U_{\tau_{1}} \cap \ldots \cap U_{\tau_{n}} \in \cover^{n-1}}\bigoplus M(U_\tau)\otimes_R  R[U_\tau].
\end{multline*}

\end{definition}

We  will  analyze  now  the  coefficient  system  associated  to   twisted  equivariant  $K$-homology.

 Let $X$  be a  space  satisfying condition  \ref{conditionCMP}.  $i_\sigma: G/H_\sigma\to  U_\sigma \to X$ be  the inclusion of  a proper  $G$-orbit into  the  contractible  slice  $U_\sigma \subset X$ and  consider  the  Borel  cohomology  group  $H^3( EG\times _G G/H_\sigma, \IZ )$. Given a class $ P \in  H^3(EG \times_G X, \IZ)$,  we  will  denote  by  $\widetilde{H_{P_\sigma}}$  the  central  extension  $1\to S^1 \to \widetilde{H_{P_\sigma}} \to  H_\sigma \to  1 $
associated  to  the    class  given  by  the  image of  $P$  under  the  maps 
$$\omega_\sigma : H^3(EG\times X, \IZ)\overset{i_\sigma ^* }{\to} H^3(EG\times_G G/H_\sigma, \IZ )\overset{\cong}{\to} H^3(BH_\sigma, \IZ)\overset{\cong}{\to} H^2(BH_\sigma , S^1).  $$

Denote  by  $R(\widetilde{H_{P_\sigma}})$  the  abelian  group  of isomorphism  classes  of  complex  representations  of  the   group $\widetilde{H_{P_\sigma}}$. Let $R_{S^1}(\widetilde{H_{P_\sigma}})$  be  the  subgroup  generated  by the complex  representations   of  $\widetilde{H_{P_\sigma}}$ on  which  the central subgroup $S^1$  acts  by  complex  multiplication.

\begin{lemma} [Coefficients  on  equivariant contractible  covers]

The  restriction  of the   functors  $\kgeom{G,0}{X}{P}$ and $\kgeom{G,1}{X}{P}$ to  the  subsets $U_\sigma$ gives covariant  functors  defined  on the  category $\cover$. 
 As  abelian  groups, the  functors  $\kgeom{G,j}{X}{P}$  satisfy: 

$$\kgeom{G,j}{U_\sigma}P\mid_{U_{\sigma}} = \begin{cases} R_{S^{1}}(\widetilde{H_{P_\sigma}})\;  \text{If}\;  j= 0   \\  0 \, \text{If}\;  j= 1    \end{cases}.$$

In  degree  $0$, a   map $ \kgeom{G,0}{U_\sigma}P\mid_{U_{\sigma}} \to \kgeom{G,0}{U_\tau}P\mid_{U_{\sigma}}$  is  given  by  induction  of  representations along  group   inclusions  $\widetilde{H}_\sigma\to  \widetilde{H}_\tau$. 

  \end{lemma}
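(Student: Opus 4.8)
The plan is to reduce the computation over the slice $U_\sigma$ first to a single orbit, then to the point equipped with the finite isotropy group, and finally to identify the resulting group with a twisted representation ring by means of an index (pushforward) map. Since $U_\sigma$ is $G$-equivariantly homotopy equivalent to the orbit $G/H_\sigma$, homotopy invariance (Theorem \ref{theoremhomologicproperties}(ii)) gives $\kgeom{G,j}{U_\sigma}{P|_{U_\sigma}}\cong \kgeom{G,j}{G/H_\sigma}{P|_{G/H_\sigma}}$. I would then apply the induction isomorphism of Theorem \ref{lemmainduction}: for the inclusion $\alpha\colon H_\sigma\hookrightarrow G$ one has $\ind_\alpha(\pt)=G/H_\sigma$, and because $\ker\alpha=\trivial$ acts freely on the point, part (i) of that theorem yields $\kgeom{H_\sigma,j}{\pt}{P_\sigma}\cong \kgeom{G,j}{G/H_\sigma}{P}$. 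This reduces the statement to the geometric twisted $K$-homology of a point for the finite group $H_\sigma$ with the twist $P_\sigma$ classified by the central extension $\widetilde{H_{P_\sigma}}$.

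For that computation I would construct an index homomorphism. A geometric cycle $(M,f,\sigma)$ over the point consists of a compact $\Spin(c)$ $H_\sigma$-manifold $M$ and a class $\sigma\in K^0_{H_\sigma}(M,-f^*P_\sigma)$; when $M$ is even-dimensional the pushforward of Theorem \ref{theopushforward} for $p\colon M\to\pt$ produces $p!(\sigma)\in K^0_{H_\sigma}(\pt,-P_\sigma)$, and the latter is identified in \cite{barcenasespinozajoachimuribe} with $R_{S^1}(\widetilde{H_{P_\sigma}})$, while $K^1_{H_\sigma}(\pt,-P_\sigma)=0$. Setting $\mu(M,f,\sigma)=p!(\sigma)$ defines a map $\mu\colon\kgeom{H_\sigma,j}{\pt}{P_\sigma}\to K^j_{H_\sigma}(\pt,-P_\sigma)$, which is well defined on equivalence classes: invariance under isomorphism is immediate; since $p!(\sigma)$ is the $H_\sigma$-equivariant index of the $\Spin(c)$-Dirac operator twisted by $\sigma$, it is a $\Spin(c)$-bordism invariant; and invariance under vector bundle modification follows from functoriality of the pushforward, because $p\circ S_{M,F}=p$ forces $p!=p!\circ S_{M,F}!$. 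Surjectivity of $\mu$ is clear, as the zero-dimensional cycle $(\pt,\id,V)$ maps to $V\in R_{S^1}(\widetilde{H_{P_\sigma}})$.

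The main obstacle is injectivity, which amounts to showing that every cycle is equivalent to the point cycle carrying its index. Here I would use the existence of a normal bundle (Lemma \ref{lemmaexistencenormalbundles}) to obtain an $H_\sigma$-representation $V$, an equivariant embedding $M\hookrightarrow S^V$, and a $\Spin(c)$ normal bundle $N$ with $TM\oplus N\cong M\times V$. Vector bundle modification by $N$ together with the normal bordism relation of Definition \ref{defgeneralizedbordism} replaces $(M,f,\sigma)$ by a cycle supported on the representation sphere $S^V=\partial D(V\oplus\IR)$; a Pontryagin--Thom argument then shows that when the index vanishes the representing class extends over the equivariant disk $D(V\oplus\IR)$, so the cycle is null-bordant. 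This yields injectivity, and in the odd case it forces the whole group to vanish since its target is $0$.

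Finally, for the coefficient-system structure, a morphism $U_\sigma\subseteq U_\tau$ of $\cover$ is $G$-homotopic to the orbit projection $G/H_\sigma\to G/H_\tau$ attached to a subconjugate inclusion $H_\sigma\hookrightarrow H_\tau$; covariant functoriality of $\kgeom{G,j}{-}{-}$ (Theorem \ref{theoremhomologicproperties}(i)) together with functoriality of the pushforward then identifies the induced map with induction of representations $R_{S^1}(\widetilde{H_{P_\sigma}})\to R_{S^1}(\widetilde{H_{P_\tau}})$ along $\widetilde{H}_\sigma\to\widetilde{H}_\tau$, as claimed.
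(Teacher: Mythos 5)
Your proof follows essentially the same route as the paper's: reduce to the orbit $G/H_\sigma$ by homotopy invariance, pass to a point with the finite isotropy group via the induction isomorphism of Theorem \ref{lemmainduction}, and identify the resulting group with $R_{S^1}(\widetilde{H_{P_\sigma}})$ (resp.\ $0$) through the pushforward to the point, with the induced maps matched to representation-theoretic induction. The only real difference is that you spell out why the index map is well defined and bijective (bordism invariance, compatibility with vector bundle modification, and a normal-bundle/representation-sphere argument for injectivity), steps the paper simply asserts by saying the pushforward ``gives an isomorphism''; your sketch of those details is sound.
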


\begin{proof}
Notice  the  group  isomorphism $\kgeom{G,j}{U_\sigma}P\mid_{U_{\sigma}}\cong \kgeom{G,j}{G/H_\sigma}P\mid_{U_{\sigma}}$ due to homotopy  invariance, part ii in Theorem \ref{theoremhomologicproperties}. The  induction structure    gives  an  isomorphism 
$$\kgeom{G,j}{G/H_\sigma}{P\mid_{U_{\sigma}}} \cong  \kgeom{H,j}{\pt }{\omega_\sigma (P)}.$$
Consider the   $H_\sigma$-trivial  Space $\pt$, and   a  geometric cycle  $$\big (M, f:M\to \pt, s: M\to \Fred(f^*(\omega_{P_\sigma})) \big ), $$
representing  an  element  in the  geometric  equivariant  $K$-homology  group 
 $$\kgeom{H,j}{\pt }{\omega_\sigma (P)}.$$ 
The  pushforward map $f_!(s): K_{H_\sigma}^j(M, f^*(\omega_{P_\sigma}))\to K_{H_\sigma}^j(\pt, \omega_{P_\sigma}) $ defined  in \cite{barcenascarrillovelasquez} associated  to  the  map  $f: M\to  \pt$ gives  an  isomorphism  to  the group $K_{H_\sigma}^j(\pt, \omega_{P_\sigma})$. On the other  hand,  this  group  has  been  verified  to  be  isomorphic  to  either 0  if  $j$  is  odd  or   the group $R_{S^{1}}(\widetilde{H_{P_\sigma}})$  if  $j$  is  even  in  \cite{barcenasespinozajoachimuribe}, 5.3.4.     

For  the   description of  the  induced  morphism,  notice  the  compatibility  of  the  induction  structure  in the  sense  of  \ref{lemmainduction}  with 	  the  representation theoretical  induction  structure  of  $ R_{S^{1}}(\widetilde{H_{P_\sigma}})$.  This  follows from  the  naturality  of  the equivariant  topological index   in the  context  of  finite  groups \cite{atiyahsegalindex}. 

\end{proof}
We  will  now  give  a  description  of Segal's spectral  sequence \cite{Segal},  used  to compute Twisted Equivariant Geometric  $K$-homology.  The  main  application  will be   Theorem \ref{theoremcoincidencekhomology} below, relating  these  groups  with analytical  approaches  to  twisted  equivariant  $K$-homology. 
The  construction  of  the  spectral  sequence is  completely  analogous  to that  of \cite{barcenasespinozauribevelasquez}. 

\begin{proposition} \label{proposition spectral sequence  twisted}
Let  $X$  be  a   $G$-space  satisfying  condition \ref{conditionCMP}. There  exists  a spectral sequence
associated to a locally finite and equivariantly contractible cover $\mathcal{U}$.  It converges to  $\kgeom{G}{X}{P}$, has for second page $E^2_{p,q}$ the homology of $\cover$
with coefficients in the functor $\mathcal{K}_0^G( ?,P|_?)$ whenever $q$ is even, i.e.
\begin{equation}
E^{2}_{p,q}:= 
H_{p}^G(X,\mathcal{U}; \mathcal{K}_{\rm geo, 0}^ G( \calu ,P|_{\calu})) 
\end{equation}
and is trivial if $q$ is odd.
Its higher differentials
$$d_{r}:E^{r}_{p, q}\to  E^{r+1}_{p-r, q+r-1}$$
vanish  for    $r$ even.

\end{proposition}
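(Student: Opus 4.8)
The plan is to dualize the cohomological construction of \cite{barcenasespinozauribevelasquez}, obtaining the spectral sequence from the skeletal filtration of the \v{C}ech nerve of $\calu$. First I would form the simplicial proper $G$-space $U_\bullet$ whose space of $p$-simplices is the disjoint union $\coprod_{\sigma \in \cover^p} U_\sigma$ of the $(p+1)$-fold intersections, with faces induced by the inclusions of intersections. Under Condition \ref{conditionCMP}, every nonempty intersection is $G$-homotopy equivalent to an orbit (this is exactly the contractible slice cover provided by the Slice Theorem for proper $G$-ANR's, \cite{antonyan}), and the augmentation $|U_\bullet| \to X$ from the thick realization is a $G$-homotopy equivalence. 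The skeletal filtration $\emptyset = F_{-1} \subset F_0 \subset F_1 \subset \cdots$ of $|U_\bullet|$ is a filtration of $X$ by proper $G$-CW pairs, and applying the homology theory $\kgeom{G,*}{-}{P}$ produces, via the six-term exact sequences of Theorem \ref{theoremhomologicproperties}, an exact couple whose associated spectral sequence is the desired one.

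Next I would identify the pages. Since the realization is thick, the subquotient $F_p/F_{p-1}$ is $G$-homotopy equivalent to the $p$-fold suspension of $\coprod_{\sigma\in\cover^p}(U_\sigma)_+$; the suspension isomorphism together with the disjoint union and excision axioms of Theorem \ref{theoremhomologicproperties} give
$$E^1_{p,q} \cong \bigoplus_{\sigma \in \cover^p} \kgeom{G,q}{U_\sigma}{P|_{U_\sigma}},$$
where $q$ is read modulo $2$ since the theory is $\IZ/2$-graded. The $d^1$ differential is the alternating sum of the maps induced by the face inclusions of intersections, so the $E^1$-row is precisely the chain complex $C_*(\cover, \kgeom{G,q}{?}{P|_?})$ computing the Bredon homology of the nerve. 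Hence
$$E^2_{p,q} \cong H_p^G\big(X, \calu; \kgeom{G,q}{?}{P|_?}\big).$$
By the Coefficients Lemma proved above, the coefficient functor equals $R_{S^1}(\widetilde{H_{P_\sigma}})$, with transition maps given by induction of projective representations, when $q$ is even, and vanishes when $q$ is odd; this yields the stated form of $E^2$ for even $q$ and its triviality for odd $q$.

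For convergence I would invoke local finiteness of $\calu$ together with compactness from Condition \ref{conditionCMP}: the nerve is then finite-dimensional, the filtration is finite, and the spectral sequence converges strongly to $\kgeom{G,*}{X}{P}$. The vanishing of $d_r$ for even $r$ is afterwards purely formal: a differential $d_r\colon E^r_{p,q}\to E^r_{p-r,q+r-1}$ has target in internal degree $q+r-1$, which is odd whenever $q$ is even and $r$ is even, so the target is a subquotient of a group that already vanishes on $E^2$; hence $d_r=0$. I expect the principal obstacle to lie in the first paragraph, namely verifying carefully that the augmentation from the realization of the \v{C}ech nerve to $X$ is a $G$-weak equivalence and that its skeletal filtration consists of proper $G$-CW pairs to which Theorem \ref{theoremhomologicproperties} genuinely applies; once this geometric input is in place, the identification of $d^1$ with the Bredon differential and the remaining page computations are routine and can be transcribed from the cohomological treatment in \cite{barcenasespinozauribevelasquez}.
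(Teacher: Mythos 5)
Your proposal is correct and follows essentially the same route the paper intends: the paper's own proof simply declares the construction to be the dual of the \v{C}ech/Segal spectral sequence of the cited cohomological treatment and only records the identification of the coefficients (periodicity and vanishing in odd degree), whereas you spell out the skeletal filtration of the nerve, the exact couple, and the page identifications explicitly. The geometric input you rightly flag as the main point to check --- that the augmentation from the realization of the nerve of a locally finite contractible slice cover to $X$ is a $G$-homotopy equivalence, so that Theorem \ref{theoremhomologicproperties} applies to the filtration --- is precisely what Condition \ref{conditionCMP} is invoked for, and the rest of your argument (the $d^1$ identification with the Bredon differential, convergence from finiteness of the cover, and the parity argument for the vanishing of the even differentials) matches the intended proof.
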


\begin{proof}
The cover consists of  open  spaces,  which  are $G$-equivariantly homotopic  to  an  orbit.  Hence,   we know that the groups $K_{q, {\rm geom}}^G(U_\sigma; P|_{U_\sigma})$ are periodic and trivial for $q$ odd. Hence, the  $E^2$  term   is  isomorphic to $H_{p}^G(X,\mathcal{U}; \mathcal{K}^q_G( ?,P|_{?}))$. 

\end{proof}

\section{Analytic  $K$-homology and  the  Index  Map.}\label{analyticalkhom}
Twisted  Versions  of  Equivariant  $K$-theory have  been   considered  in the  literature  using   a number of  analytical constructions  involving  $KK$ theory \cite{echterhoffchabertgroupextensions}, \cite{tuxustacks}.

We  will  use  the  coincidence    of  these  approaches, previously  studied  in the  literature \cite{tuxustacks},  \cite{emersonmeyer},   as  well as  duality  results  to define an  analytical index  map with  image   on  an    equivariant  $KK$-theory  group,  which is  verified  to  be  an  isomorphism  for proper  finite $G$-CW  complexes.

We  shall recall  notations  for  equivariant  $KK$-groups. References  for  further study  include  \cite{blackadar}, \cite{mislinvalette}.

 \begin{definition}
Let  $A$ and  $B$  be  graded  $G$-$C^*$  algebras. The  set $\mathcal{E}_G(A,B)$  of  equivariant  Kasparov  modules  consists of  triples  $(E,\phi, T)$  of  a  countably  generated, graded  Hilbert $B$-module $E$  with a  continuous  action  of $G$, an  equivariant  graded  $*$-homomorphism  to the  adjointable  operators  of  the  Hilbert  Module  $E$  $\phi: A\to \mathbb{B}(E)$, and  a  $G$-continuous  operator $F$  of  degree  $1$  in  $\mathbb{B}(E)$, with  the  following  properties: 
\begin{itemize}
\item The  commutators $[F, \phi(a)]$  are  in  the  algebra  of  compact, adjoinable  operators $ \mathbb{K}(E)$ for  all $a\in A$.  
\item    $(F^2-1) \phi(a) \in \mathbb{K}(E)$.  
\item  $ (F-F^*)\phi(a)\in \mathbb{K}(E)$. 
\item  $(gF-F)\phi(a) \in \mathbb{K}(E)$. 
\end{itemize}

Here  $\mathbb{K}(E)$ denotes  the  compact  adjoinable  operators  of  the  Hilbert  Module  $E$. 
Direct  sum   and  homotopy  turn  $KK_G$  into  a  group,  \cite{blackadar} 20.2  in page  206.  Usings  Kasparov's   Bott  periodicity, \cite{blackadar} 20.2. 5, these groups  are  extended  to  $\mathbb{Z}/2$-graded  groups $KK_G^*( \quad, \quad)$,  which  are  covariant  in  the  second variable,  respectively  contravariant  in   the  first  one.  

\end{definition}

\begin{definition}
Let $X$  be  a  proper  $G$-CW  complex. Given a  projective  unitary  stable $ G$-equivariant   bundle  $P$  defined   with  respect  to  a   stable  Hilbert space $\HH$,  consider  the  Space  $\KK$ of  compact operators  in  $\HH$. Let  the  group $P\UU( \HH)$  (with  the $*$-strong  topology), act  by  conjugation  on the   space  $\KK$  of  compact  operators  on $\HH$ with the  norm  topology (which  agrees  with  the $*$-topology.

The continuous  trace  Bundle  associated  to $P$  is  the  bundle with  structural group  $P\UU( \HH)$ and fiber $\KK$ associated  to  $P$,  denoted  by $\KK_P=  P\times_{P\UU(\HH)} \KK$.   

\end{definition}
The  following  result  summarizes  some  properties of  the  continuous  trace  bundle  associated  to  a  twist $P$. 

\begin{proposition}
Let  $P$  be  a  projective  unitary  stable $ G$-equivariant   bundle  $P$ defined  over  a  $G$-CW  complex  $X$. Then,  
\begin{enumerate}
\item The  bundle  $\KK_P$  is  locally  trivial. 
   
\item The  sections vanishing  at  infinity  of $\KK_P$  form a  $G$-Hilbert  Module over   the  $C^*$-algebra  $C_0(X)$, denoted  by  $A_P$.

\end{enumerate}
\end{proposition}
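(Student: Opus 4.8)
The plan is to treat the two assertions separately, deriving local triviality from the associated-bundle construction and the Hilbert module structure from the fibrewise operator trace.

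For (i), recall that by Definition \ref{def projective unitary G-equivariant stable bundle} the twist $P\to X$ is a locally trivial principal $P\UU(\HH)$-bundle, so around each point $x$ there is a $G$-neighbourhood $V\cong U\times_{G_x}G$ on which $P|_V\cong (P\UU(\HH)\times U)\times_{G_x}G$. Since forming an associated bundle preserves local triviality, I would transport these trivializations through the norm-continuous conjugation action of $P\UU(\HH)$ on $\KK$: over $V$ one obtains $\KK_P|_V\cong (\KK\times U)\times_{G_x}G$, a fibrewise homeomorphism compatible with the left $G$-action inherited from $P$. Continuity of the transition data is guaranteed because conjugation $P\UU(\HH)\times\KK\to\KK$ is continuous for the norm topology on $\KK$, the central $S^1$ acting trivially being exactly what makes the $P\UU(\HH)$-action, and hence the bundle, well defined. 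This yields the $G$-equivariant local triviality of $\KK_P$.

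For (ii), the right $C_0(X)$-action on $A_P$ is pointwise scalar multiplication $(a\cdot f)(x)=f(x)\,a(x)$, which is well defined since $f(x)\in\IC$ acts as a scalar and commutes with the conjugation structure group; the proposed $C_0(X)$-valued inner product is the fibrewise trace pairing $\langle a,b\rangle(x)=\tr\bigl(a(x)^*b(x)\bigr)$. Positivity is immediate from $\langle a,a\rangle(x)=\tr\bigl(a(x)^*a(x)\bigr)\geq 0$, while $C_0(X)$-linearity and conjugate symmetry are formal, and $G$-equivariance follows because the left $G$-action on sections is implemented by the structure group and conjugation preserves the trace, so that $\langle g\cdot a,\,g\cdot b\rangle = g\cdot\langle a,b\rangle$.

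The main obstacle is the very definition and completeness of this inner product: for $a,b\in\KK(\HH)$ the product $a(x)^*b(x)$ is compact but need not be trace class, so $\tr\bigl(a(x)^*b(x)\bigr)$ is a priori undefined. I would resolve this by first restricting to the dense $*$-subalgebra of sections taking fibrewise Hilbert--Schmidt values, where the product of two Hilbert--Schmidt operators is trace class and $x\mapsto\tr\bigl(a(x)^*b(x)\bigr)$ is continuous and vanishes at infinity, hence lies in $C_0(X)$. Equivalently, one passes to the associated bundle $\mathrm{HS}_P$ whose fibre is the Hilbert space of Hilbert--Schmidt operators, on which $P\UU(\HH)$ acts by conjugation preserving the Hilbert--Schmidt inner product (the central circle acting as $\lambda\bar\lambda=1$), so that $\mathrm{HS}_P$ is a genuine $G$-Hilbert bundle. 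Completing the fibrewise Hilbert--Schmidt sections in the norm induced by $\langle\cdot,\cdot\rangle$ then produces the desired $G$-Hilbert $C_0(X)$-module, whose underlying continuous-trace $C^*$-algebra is $A_P$. Checking that this completion is preserved by the $G$-action and recovers exactly the sections vanishing at infinity is the technical heart of the argument and the step I expect to require the most care.
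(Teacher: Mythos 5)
Your treatment of (i) is correct and is essentially the route the paper takes (the paper compresses it to ``follows from the local triviality of $P$''): the local trivializations $P|_V\cong(P\UU(\HH)\times U)\times_{G_x}G$ from Definition \ref{def projective unitary G-equivariant stable bundle} pass to the associated bundle $\KK_P=P\times_{P\UU(\HH)}\KK$ because the conjugation action of $P\UU(\HH)$ (with the $*$-strong topology) on $\KK$ (with the norm topology) is continuous and the central $S^1$ acts trivially. Nothing to object to there.

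Part (ii) is where the proposal goes wrong. The object the paper needs --- and uses afterwards as the second argument of $\mathcal{R}KK_{G,X}^*(C_0(X),A_P)$ and in the crossed product $A_P\rtimes G$ --- is the section $C^*$-algebra $A_P=C_0(X,\KK_P)$ viewed as a $G$-$C_0(X)$-algebra: the $C_0(X)$-module structure is pointwise scalar multiplication, the norm is the fibrewise operator norm, and what has to be checked (again from local triviality and continuity of conjugation) is that $x\mapsto\|a(x)\|$ is continuous, that the $C_0$-sections are closed under fibrewise product and adjoint, and that the $C_0(X)$-action is central and nondegenerate. No $C_0(X)$-valued inner product enters. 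Your fibrewise trace pairing does produce a Hilbert $C_0(X)$-module, but it is the wrong one: the completion of the Hilbert--Schmidt-valued sections in the norm $\sup_x\tr\bigl(a(x)^*a(x)\bigr)^{1/2}$ is the space of $C_0$-sections of the bundle whose fibre is the Hilbert space of Hilbert--Schmidt operators, and since $\HH$ is infinite-dimensional this is a \emph{proper} subspace of $C_0(X,\KK_P)$ (a section whose value at some point is compact but not Hilbert--Schmidt already lies outside it; the Hilbert--Schmidt norm dominates the operator norm, so the completion cannot grow to all compact-operator-valued sections). Hence the step you flag as the technical heart --- that the completion recovers exactly the sections of $\KK_P$ vanishing at infinity --- is not merely delicate but false, and the module you build is not $A_P$. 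The repair is to drop the trace altogether and read the statement as asserting the $G$-$C_0(X)$-algebra structure on $C_0(X,\KK_P)$, which is what the paper's one-line proof ``follows from local triviality'' is implicitly claiming.
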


\begin{proof}
We  will verify the   statements.
\begin{enumerate}
\item It  follows  from  the  local  triviality of  the  projective  unitary  stable  $G$-Bundle  $P$. 
\item  This  follows  from  local  triviality.

\end{enumerate}
\end{proof}

\begin{theorem}\label{theoremequivalencekk}
Let  $X$  be a  proper, $G$- compact .  There  exist group  isomorphisms 
$$A: \ktheory{G}{*}{X} \longrightarrow  \mathcal{R} KK_{G, X}^*(C_0(X), A_P) $$

$$B: \mathcal{R} KK_{G, X}^*(C_0(X), A_P) \longrightarrow  KK( \IC, A_P\rtimes G) $$ 
Where  the  right  hand  side of  the  first  equation,  respectively the  right hand  side  of  the  second  one   is   representable  $KK$-theory \cite{kasparovrepresentable}  with  respect  to  $C_0(X)$.
\end{theorem}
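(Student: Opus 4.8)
The plan is to establish the two isomorphisms separately, each being a twisted, equivariant incarnation of a classical identification: the map $A$ realises the representability of twisted equivariant $K$-theory in the sense of Kasparov, while $B$ is the Baum--Connes assembly map, which is an isomorphism precisely because the action on $X$ is proper and $G$-compact. Throughout I use the local triviality of $\KK_P$ and the description of $A_P$ from the preceding proposition.

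To construct $A$, I would turn a Fredholm section into a representable Kasparov cycle. Let $\mathcal{E}_P$ be the standard Hilbert $A_P$-module attached to $P$, assembled from the local fibres $\HH$, regarded as right Hilbert $\KK$-modules, and glued by the conjugation action of $P\UU(\HH)$, so that $\mathbb{K}(\mathcal{E}_P)=A_P$; the algebra $C_0(X)$ acts by pointwise multiplication through a $G$-equivariant, $C_0(X)$-linear $*$-homomorphism $\phi$. Given a class in $\ktheory{G}{0}{X}$ represented by a $G$-invariant Fredholm section $\sigma$, I normalise $\sigma$ to a self-adjoint operator $F$ of degree one and norm at most one; the Fredholm property forces $(F^2-1)\phi(f)$, $(F-F^*)\phi(f)$, $[F,\phi(f)]$ and $(gF-F)\phi(f)$ to lie in $\mathbb{K}(\mathcal{E}_P)=A_P$ for all $f\in C_0(X)$ and $g\in G$, so that $(\mathcal{E}_P,\phi,F)$ is an $X$-linear equivariant Kasparov cycle over $(C_0(X),A_P)$. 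Homotopies of sections yield operator homotopies of cycles and disjoint union goes to direct sum, so $A$ is a well-defined homomorphism, and Bott periodicity matches the two $\IZ/2$-gradings.

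To see that $A$ is an isomorphism I would observe that both sides are $G$-equivariant cohomology theories on proper $G$-CW pairs, natural in $X$: the source satisfies homotopy invariance, excision and Mayer--Vietoris by \cite{barcenasespinozajoachimuribe}, and the representable $KK$-groups satisfy the same properties by the formal machinery of $KK$. Since $A$ is a natural transformation, it suffices, by a Mayer--Vietoris induction over the cells of a finite proper $G$-CW model together with the five lemma, to check that $A$ is an isomorphism on a single orbit $G/H$. There both groups are computed, by restriction along $i_\sigma$ and the induction structure, to be the twisted representation ring $R_{S^1}(\widetilde{H_{P_\sigma}})$ --- the left-hand side by \cite{barcenasespinozajoachimuribe}, and the right-hand side because over a homogeneous proper space the representable cycles reduce to finite-dimensional projective representations of $\widetilde{H_{P_\sigma}}$ --- and $A$ is the tautological identification of these descriptions. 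This is the coincidence established for differentiable stacks in \cite{tuxustacks} and \cite{emersonmeyer}, which may alternatively be cited directly.

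For $B$, Kasparov's descent sends a representable cycle to a class in $KK(C_0(X)\rtimes G, A_P\rtimes G)$; composing with the $K$-theory class of a $G$-cutoff function for the proper, $G$-compact space $X$, i.e. the canonical element of $KK(\IC, C_0(X)\rtimes G)$, produces the homomorphism $B$ into $KK(\IC, A_P\rtimes G)$. This composite is the assembly map for the coefficient algebra $A_P$, and because the $G$-action on $X$ is proper with compact quotient it is an isomorphism unconditionally, this being the elementary case of the Baum--Connes picture (the proper Green--Julg theorem; see \cite{mislinvalette}).

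The main obstacle is the isomorphism $A$: it bridges two genuinely different models --- homotopy classes of Fredholm sections on one side and $C_0(X)$-linear Kasparov cycles on the other --- and the delicate points are surjectivity (every such cycle must be deformed, using local triviality of $\KK_P$ and a $G$-invariant partition of unity, into one coming from a section) and the compatibility of the comparison with the induction structure, so that the orbit-wise identification with $R_{S^1}(\widetilde{H_{P_\sigma}})$ is natural. Granting this, $B$ is comparatively formal, its essential input being only the properness and $G$-compactness of $X$.
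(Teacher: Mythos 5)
Your proposal is correct in substance but takes a genuinely different route for the isomorphism $A$. The paper treats $A$ entirely by citation: it observes that the action groupoid $\Gamma = X \rtimes G$ is proper, that $\KK_P$ gives a Fell bundle over $\Gamma$, and then invokes Proposition 6.11 of \cite{tuxustacks} to identify $KK_\Gamma^*(C_0(X), C_0(X,\KK_P))$ with $K_*(C_r^*(\Gamma,\KK_P))$ and Theorem 3.14 of \cite{tuxustacks} to identify the latter with the Fredholm-section picture of $\ktheory{G}{*}{X}$. You instead build the comparison map by hand (Fredholm section $\mapsto$ cycle on the standard module $\mathcal{E}_P$ with $\mathbb{K}(\mathcal{E}_P)=A_P$) and prove it is an isomorphism by Mayer--Vietoris induction over a finite proper $G$-CW structure, reducing to orbits $G/H$ where both sides become $R_{S^1}(\widetilde{H_{P_\sigma}})$. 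This buys an explicit, self-contained argument and makes the naturality of $A$ visible, at the cost of one step that is less formal than you suggest: since the base space in $\mathcal{R}KK_{G,X}^*(C_0(X),A_P)$ varies along with the first variable, the excision and Mayer--Vietoris properties you need are those of groupoid-equivariant $KK$ for $X\rtimes G$ restricted to invariant subcomplexes, which must be taken from \cite{emersonmeyer} or \cite{tuxustacks} rather than from ``the formal machinery of $KK$''; you partially acknowledge this by offering the direct citation as an alternative, which is exactly the paper's choice. For $B$ your argument coincides with the paper's: Kasparov descent followed by pairing with the Mishchenko/cutoff class, an isomorphism by properness and $G$-compactness --- the paper cites Theorem 2.6 of \cite{echterhoffemersonkim} where you invoke the proper Green--Julg theorem, but these are the same fact.
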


\begin{proof}

\begin{itemize}
\item This  is a  consequence  of Theorem  3.14  in page 872  and  proposition 6.10 in page  900  of  \cite{tuxustacks}.  However, we  will  collect  the  needed  results. First  notice  that    the  action  groupoid  $\Gamma:= X \rtimes G$  is  proper  and  locally  compact. The continuous  trace  Algebra  bundle $A_P$  carries  the  structure  of  a  Fell  bundle  over $\Gamma$. Tu-Xu-Laurent  construct in \cite{tuxustacks}  the  reduced  $C^*$ algebra  associated  to  a  Fell  bundle   and  denote it  by  $C_r^*(\Gamma, E)$.     

 Specializing Proposition  6.11 in page  900  of  \cite{tuxustacks}  identifies the  Kasparov KK-group
 $$KK_{\Gamma}^*(C_0(X), C_0(X, \KK_P))$$  
 with the  $K$-Theory  groups  $K_*(C_r^*(\Gamma, \KK_P)).$
 Theorem 3.14   in \cite{tuxustacks}  identifies  this  $K$-Theory  group  with the  twisted  equivariant  $K$-theory  groups  $\ktheory{G}{*}{X} $ as  introduced  in  \cite{barcenasespinozajoachimuribe}  in  terms  of homotopy  groups  of  spaces  of  sections  of  Fredholm  operators. 
 On the  other hand,  the   groups $KK_{\Gamma}^*(C_0(X), C_0(X, \KK_P)$ are  isomorphic  to  the   groups 
 $\mathcal{R} KK_{G, X}^*(C_0(M), A_P)$ of  representable,  equivariant  $KK$- theory  with  respect  to $X$.  This  gives  isomorphism  A.

 \item  For  the isomomorphism  $B$, consider the  composition 
 $$  \mathcal{R} KK_{G, X}^*(C_0(M), A_P) \longrightarrow KK (C_0(X) \rtimes G, A_{P}\rtimes G) \to KK ( \IC, A_P \rtimes G)  , $$ 
 where   the  first  map  is  Kasparov's  descent  homomorphism  and  the  second   is  given  by  tensoring  with  the  Mishchenko-line  bundle  for  $X$. It  is  proved  in \cite{echterhoffemersonkim}, Theorem 2.6  in page  15,  that this  map  is  an  isomorphism. 
\end{itemize}

\end{proof}

General  instances  of  the  following  theorem  have  been  proved  in \cite{echterhoffemersonkim} ( Theorem  1.2 in page  3), as  well as  \cite{emersonmeyer}, Section  7.3.  

\begin{theorem}\label{teoanalyticalpoincaredualityechtehoff}
Let  $G$ be  a  discrete Group. Let  $M$  be  a proper,  cocompact $\Spin(c)$ smooth $G$-manifold.  Let  $P \in H^3(X\times_G, EG,  \IZ)$  be  a   class  representing  a  projective  unitary stable $G$-equivariant  bundle. Denote  by $A_P$  the   continuous  trace algebra  associated  to  it  and  let $A_{-P}$  be the  continuous  trace  algebra  asssociated  to  the  additive  inverse of  $P$.

There  exists an  isomorphism  of  Kasparov  $KK$-groups
$$PD: \mathcal{R} KK_{G, X}^*( A_{P},  \IC) \longrightarrow KK_G^*(\IC,  A_{-P})$$

\end{theorem}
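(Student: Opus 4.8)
The plan is to realize $PD$ as the Kasparov product with a pair of duality classes attached to the $\Spin(c)$ structure of $M$ and to the twist $P$, thereby exhibiting $A_{-P}$ as the abstract Kasparov dual of $A_P$ in the sense of \cite{emersonmeyer}. Concretely, I would produce an \emph{evaluation} (fundamental) class
$$\Delta \in KK_G^*\bigl(A_P \otimes_{C_0(M)} A_{-P},\ \IC\bigr)$$
and a \emph{coevaluation} (dual-fundamental) class
$$\Theta \in \mathcal{R}KK_{G,M}^*\bigl(C_0(M),\ A_P \otimes_{C_0(M)} A_{-P}\bigr),$$
the latter descending to a class in $KK_G^*(\IC, A_P \otimes_{C_0(M)} A_{-P})$. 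Given $x \in \mathcal{R}KK_{G,M}^*(A_P,\IC)$ I would then set $PD(x) := \Theta \otimes_{A_P} x \in KK_G^*(\IC, A_{-P})$, with candidate inverse $y \mapsto (1_{A_P}\otimes y)\otimes_{A_P\otimes A_{-P}} \Delta$. The whole argument reduces to verifying the two triangle (zig-zag) identities relating $\Delta$ and $\Theta$, which by the general machinery of \cite{emersonmeyer}, Section 7.3, force $PD$ to be an isomorphism.

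The first step is to strip the twist off the tensor product. Since $P$ and $-P$ are additive inverses in the group of projective unitary stable $G$-equivariant bundles, the continuous trace algebras satisfy a $G$-equivariant Morita equivalence $A_P \otimes_{C_0(M)} A_{-P} \simeq C_0(M,\KK) \sim C_0(M)$; this is exactly the content of the unnumbered Proposition of Section~\ref{sectionreview} representing the zero class $P+{-P}$ by the Hilbert--Schmidt bundle $L_{HS}(E_P,E_P^*)$. Under this identification the evaluation $\Delta$ becomes the Dirac fundamental class $[D_M] \in KK_G^*(C_0(M),\IC)$ of the $\Spin(c)$ manifold $M$: because $M$ is a complete proper $\Spin(c)$ $G$-manifold, its $\Spin(c)$ Dirac operator defines such a class, and cocompactness (the quotient $M/G$ is compact) guarantees it lands in the correct group with no pathology at infinity.

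For the coevaluation $\Theta$ I would use the $\Spin(c)$ structure on $TM$. Being $G$-equivariantly $\Spin(c)$, the tangent bundle is $K$-oriented and carries a Thom class; combined with Kasparov's dual-Dirac construction, localized to a tubular neighbourhood of the diagonal (via an equivariant embedding of $M$, which properness permits), this produces $\Theta$ over $M$. Transporting through the Morita equivalence of the previous step identifies $\Theta$ with the usual equivariant dual-Dirac element of $TM$, so that both duality classes are the twisted incarnations of Kasparov's classical data.

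The heart of the proof, and the main obstacle, is the verification of the two zig-zag identities, which schematically read
$$\Theta \otimes_{A_{-P}} \Delta = 1_{A_P} \qquad\text{and}\qquad \Theta \otimes_{A_P} \Delta = 1_{A_{-P}},$$
carried out $G$-equivariantly and in the presence of the twist. My strategy is to use the Morita equivalence of the first step to reduce each identity to its untwisted counterpart, where it is precisely Kasparov's $\Spin(c)$ Poincar\'e duality for the proper cocompact $G$-manifold $M$, as packaged in \cite{echterhoffemersonkim}, Theorem 1.2, and \cite{emersonmeyer}, Section 7.3. The delicate point is to check that the Morita equivalence is compatible with the Kasparov products defining the two sides of each identity, i.e.\ that stripping the twist commutes with the composition and cap-product operations, and that the dual-Dirac and Dirac constructions go through for a noncompact but proper, cocompact $M$. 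Once both identities are established, $PD$ and its partner are mutually inverse, yielding the asserted isomorphism.
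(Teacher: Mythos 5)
Your proposal is mathematically sound and, at bottom, rests on the same two references as the paper, but it packages the argument quite differently. The paper's proof is a two-step composition: first the isomorphism $\mathcal{R} KK_{G, X}^*( A_{P}, \IC ) \cong \mathcal{R} KK_{G, X}^*(\IC, A_{-P})$, cited as Theorem 1.2 of \cite{echterhoffemersonkim}, and then the identification $\mathcal{R} KK_{G, X}^*(\IC, A_{-P}) \cong KK_G^*(\IC, A_{-P})$, which is where cocompactness of $M$ enters as a separate, explicit step. You instead open up the first citation: you reconstruct the duality from scratch in the Emerson--Meyer formalism, producing the evaluation class $\Delta$ (the twisted Dirac class, untwisted via the Morita equivalence $A_P\otimes_{C_0(M)}A_{-P}\sim C_0(M)$ coming from $P+(-P)=0$) and the coevaluation class $\Theta$ (dual-Dirac), and defining $PD$ as a cap product. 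This is exactly how \cite{echterhoffemersonkim} prove their Theorem 1.2, so what your route buys is self-containedness and an explicit formula for $PD$ and its inverse; what it costs is that the zig-zag identities, which you correctly identify as the heart of the matter, are left to the same references, so no citation is actually eliminated. Two points to tighten: (i) the passage from $\mathcal{R}KK_{G,M}$ to $KK_G$ via cocompactness should be isolated as its own step rather than absorbed into the phrase that $\Theta$ ``descends''--this is precisely the paper's second isomorphism; (ii) your schematic zig-zag identities as written do not typecheck (the algebras over which the products are taken and the identities $1_{A_P}$, $1_{A_{-P}}$ need the $C_0(M)$-balanced tensor products spelled out), though this is presentational rather than a gap in the idea.
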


\begin{proof}
The  proof  consists  of two   $KK$-equivalences. 

$$  \mathcal{R} KK_{G, X}^*( A_{P}, \IC ) \cong \mathcal{R} KK_{G, X}^*(\IC, A_{-P}) \cong KK_G^*(\IC,  A_{-P})$$ 
The  first  isomorphism  comes  from Theorem  1.2  in  \cite{echterhoffemersonkim}. 
   The  last  equivalence  follows   from  the  compacity  assumption on  $M$.  
 
\end{proof}

\begin{theorem}\label{theoremcoincidencekhomology}
Let  $X$  be  a finite  proper  $G$-CW  complex. There  exists   an  isomorphism 
$${\rm Index}: \kgeom{G, *}{X}{P} \longrightarrow KK_G^*(\mathbb{C}, A_{P})$$
\end{theorem}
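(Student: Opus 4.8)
The plan is to construct the index map directly on geometric cycles and then prove it is an isomorphism by reducing everything to the already-established analytic machinery and the spectral sequence. Given a geometric cycle $(M,f,\sigma)$ with $\dim M = n$, the class $\sigma \in K_G^0(M, -f^*(P))$ together with the $\Spin(c)$-structure on $M$ furnishes a Dirac-type element in $KK_G(\IC, A_{-f^*P} \otimes \Cliff(TM))$, and $KK_G$-theoretic Poincar\'e duality for the cocompact $\Spin(c)$-manifold $M$ (Theorem \ref{teoanalyticalpoincaredualityechtehoff}) converts this into a class in $\mathcal{R}KK_{G,M}^*(A_{f^*P}, \IC)$. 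Composing with the pushforward induced by $f\colon M \to X$ in representable $KK$-theory and then applying the equivalences $A$ and $B$ of Theorem \ref{theoremequivalencekk}, one lands in $KK_G^*(\IC, A_P)$. I would first check that this assignment is well defined on equivalence classes: isomorphism invariance is immediate from functoriality of all the constructions; bordism invariance follows because a bordism $(W,F,\Sigma)$ produces a $KK_G$-homotopy (the boundary of a Kasparov module over $W$ vanishes in $KK_G$, exactly as in the classical index pairing argument); and vector bundle modification invariance is the content of the Thom isomorphism compatibility of the pushforward $f!$ from Theorem \ref{theopushforward}, since modification replaces $M$ by a sphere bundle along a $\Spin(c)$-bundle and the Dirac class transforms by the Thom class.

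Once the map ${\rm Index}$ is well defined and additive, I would prove bijectivity by a Mayer-Vietoris/induction argument over the equivariant cells of the finite proper $G$-CW complex $X$, comparing the geometric theory with the analytic theory cell by cell. Both sides satisfy the six-term exact sequence: the geometric side by Theorem \ref{theoremhomologicproperties}(iii), and the analytic side $KK_G^*(\IC, A_P)$ because $KK$-theory is exact in the relevant variable and $A_{(\cdot)}$ behaves well under the cofiber sequences coming from the skeletal filtration. The index map commutes with the boundary homomorphisms by naturality of the descent homomorphism and the duality isomorphism, so by the five lemma it suffices to prove the isomorphism on the orbit cells $G/H \times D^n$, equivalently (by homotopy invariance) on the orbits $G/H_\sigma$ themselves. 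On such an orbit the induction structure of Theorem \ref{lemmainduction} reduces the claim to a point with $H_\sigma$-action, where the geometric group is $R_{S^1}(\widetilde{H_{P_\sigma}})$ by the coefficient computation, and the analytic group $KK_{H_\sigma}^*(\IC, \omega_\sigma(P))$ is the same twisted representation ring; the index map is exactly the classical equivariant topological index for finite groups, which is the Atiyah-Segal isomorphism.

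To organize the two arguments coherently I would invoke the spectral sequence of Proposition \ref{proposition spectral sequence twisted}. The geometric theory carries a spectral sequence with $E^2$-term the Bredon homology $H_p^G(X,\mathcal{U}; R_{S^1}(\widetilde{H_{P_\sigma}}))$, and the analytic side $KK_G^*(\IC, A_P)$ admits an entirely parallel spectral sequence (this is the homological dual of the Atiyah-Segal spectral sequence for twisted equivariant $K$-theory built in \cite{barcenasespinozauribevelasquez}), with the same $E^2$-term because the analytic coefficients on an orbit $G/H_\sigma$ compute the same twisted representation ring. The index map induces a morphism of spectral sequences, which on $E^2$ is the identity by the orbitwise Atiyah-Segal computation just described; a comparison theorem for spectral sequences then forces ${\rm Index}$ to be an isomorphism on the whole of $\kgeom{G,*}{X}{P}$.

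The main obstacle I expect is well-definedness under vector bundle modification, since it requires the index class to be genuinely compatible with the Thom isomorphism built into the pushforward $f!$, and keeping track of the orientation and $\Spin(c)$-bookkeeping in the twisted setting is delicate. A secondary difficulty is establishing the analytic spectral sequence and matching its differentials and extension problems with the geometric one; fortunately, since the comparison is an isomorphism already at $E^2$, higher differentials on both sides agree automatically and no independent computation of them is needed.
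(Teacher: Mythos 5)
Your proposal is correct and follows essentially the same route as the paper: the index map is defined on a cycle $(M,f,\sigma)$ as $f_*\circ PD\circ A(\sigma)$ using the $KK$-equivalence of Theorem \ref{theoremequivalencekk} and the Poincar\'e duality of Theorem \ref{teoanalyticalpoincaredualityechtehoff}, and bijectivity is obtained by comparing the spectral sequences of the two equivariant homology theories, which coincide at $E^2$ via the orbitwise identification with the twisted representation ring $R_{S^1}(\widetilde{H_{P_\sigma}})$. Your explicit verification of well-definedness under bordism and vector bundle modification is more detailed than the paper's proof, which leaves these checks implicit, but the overall strategy is the same.
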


\begin{proof}
Consider  a  geometric  cycle  $(M,f,\sigma)$,  and  recall  that  it  consists  of  a $G$-compact, proper  orientable, $\Spin(c)$  manifold  without  boundary $M$,  a $G$-equivariant map  $f: M\to X$,  as  well  as  homotopy  class of  a  $G$-invariant  section  $\sigma:  M\to \mathcal{K}_{-f^*(P)}$  representing  an  element  on the twisted  equivariant  $K$-theory  group $K_G^0(M, -f^*(P))$.   Consider  the   Poincar\'e  duality  isomorphism \ref{teoanalyticalpoincaredualityechtehoff} and  the  equivalence  to  the  $KK$-approach  \ref{theoremequivalencekk}. They  give   isomorphims  fitting  in the  following  commutative  diagram: 

$$ K_{G}^*( X, -f^*(P)) \overset{A}{\cong} \mathcal{R}_{X}KK_G^*(A_{-f^*(P)}, \IC) \overset{PD}{\cong} KK_G^*(\IC, A_{f^*(P)})$$

Define  the  map  ${\rm Index}$  on the  cycle  $(M, f,  \sigma)$   as  the  element $ f_* \circ  PD \circ A ( \sigma)$,  where  $f_*$  denotes   the  map  induced  by  $f$ on  equivariant $KK$-theory groups.

Since  Both  functors  satisfy  the  axioms  of  an  equivariant  homology  theory, there  exist   spectral  sequences $E_{p,q}^r$  converging  to  
$\kgeom{*}{X}{P}$ , as  indicated  in \ref{proposition spectral sequence  twisted}. 	Using  the  usual  homological  properties  of  $KK$-theory,  one  constructs  out  of  the  cover  for  the  spectral sequence  in \ref{proposition spectral sequence  twisted},  a spectral  sequence   $F^r_{p,q}$  converging  to  $KK_G^*(\IC, A_{P})$.  

Moreover,  the ${\rm  Index} $ map   is  natural  and  gives  a natural  transformation  between  the   second  terms  of  the  spectral  sequences,    which  consists  of  isomorphisms for a  compact=finite=cocompact $G$-CW complex  $X$.
 \end{proof}

\section{Relation  to  twisted  crossed  products} \label{Section Crossed  Products}
We  state  briefly  the  relation  to  twisted crossed  products,   studied  previously  in   \cite{echterhoffwilliams}, \cite{packerraeburn}.  

Let  $G$  be  a discrete, countable  group.

\begin{definition}
 A  model  for  the  classifying  space  for  proper  actions  $\eub{G}$  is a  proper  $G$-CW   complex  $\eub{G}$  with  the  property  that,  given  any  other   proper  $G$-CW  complex  $X$,   there exists up  to  $G$-equivariant  homotopy a  unique  map $X\to \eub{G}$.  
\end{definition}

Models  for  $\eub{G}$ always  exists   and  are  unique up to  $G$-equivariant  homotopy \cite{lueckclassifying}, Theorem  1.9 in page 275. Moreover, $\eub{G}$  can  be  characterized  as a  $G$-CW Complex   with  isotropy  in  the  family  of  finite  subgroups  and  such  that   the  fixed  point  set $X^H$  is  either  contractible  if $H$  is  finite  or  empty  otherwise. 

Notice  in particular  that $\eub{G}$  is  contractible  after  forgetting  the  group  action. 

Hence, the  Borel  construction $ \eub{G} \times _G EG$ is   a  model  for  $BG$, the  base space  of  the  universal   principal  $G$-bundle $G\to EG \to BG$. This  gives  an  isomorphism  

$$\omega: H^3(\eub{G}\times_G EG, \IZ)\to   H^3(G, \IZ),$$
 
 where $H^3(G, \IZ)$  denotes group  cohomology with  constant  coefficients,  which  is  isomorphic  to  Borel-Moore  cohomology \cite{moorecohomology} since  $G$  is  discrete.  
 
Let $S^1$  be  the abelian   group of  complex  numbers  of  norm  one. 

There  exists an  isomorphism $\omega: H^3(G, \IZ)\to H^2( G, S^1)$,   which  is  given as  a  connection  homomorphism  of  the  long  exact  sequences produced  by  the  coefficient  sequence $1\to \IZ\to \IR \to S ^1\to 1 $.

 Given a  cohomology class $P \in  H^3(\eub{G}\times_{G} EG, \IZ)$,  we  will  denote  by $\alpha_\omega(P)$  a cocycle  representative   $\alpha_{\omega(P)}:  G\times  G \to  S^1$ of  the  class  associated  to  $\omega(P) \in H^2(G, S^1)=  Z^2(G, S^1)/ B^2(G,S^1)$. 
   
Given  such  a  cocycle  $\alpha_{\omega(P)}$,  one  can  form  the  reduced  and   full $\alpha_{\omega(P)}$-twisted  group $C^*$-algebras, as  follows. 

\begin{definition}
 Let $G$  be  a  discrete  group. Let $\alpha: G\times  G\to  S ^1$  be a $2$-cocycle. The  $\alpha$-twisted  convolution  algebra $l^1(G, \alpha)$  is  the  space  of  all  summable  complex  functions  on  $F$  with  convolution  and  involution  given  by 
 $$ f*_{\alpha }g( s)=  \sum_{t\in G} f(t)g(t^{-1}s)\alpha(t, t^{-1}s),  $$ 
 
 $$ f^*(s) =  \alpha(s, s^{-1})f( s^{-1}). $$ 
The  full  twisted  group algebra  $C^*(G,  \alpha) $  is  defined  as  the  enveloping  $C^*$  algebra of  $l^1(G, \alpha)$. 

The  reduced  twisted  group $C^*$-algebra  $C_{r}^*(G, \alpha)$  is  the  closure  of  the \emph{ $\alpha$-regular  representation} $L_\alpha: G\to U( l^2(G))$  given  by  
$$ (L_{\alpha}(s)\xi)(t)= \alpha(s, s^{-1}t) \xi(s^{-1}t), \; \xi \in l^2(G),  s,\,  t\in G $$

 \end{definition}
In a  similar  fashion,  given  a   discrete,  countable  group $G$,  a  cocycle  $\alpha:G\times G\to S^1$,  a  $C^*$ algebra  $A$   and  a  map  $U:  G \to Aut(A)$,
satisfying  $U(g_1 g_2)= \alpha(g_1, g_2) U(g_1) U_(g_2)$,  one    forms   
the  twisted    reduced  and full  crossed  product, denoted  by $A\rtimes_{\alpha}^r G$ respectively $A\rtimes_{\alpha} G$. See  \cite{packerraeburn}  for  details.

The  following  result  relates the Continuous  trace  Algebra $A_P$  with  a  twisted   crossed  product  construction. 

\begin{proposition}\label{propositioncontinuostraceeub}
The  $C_0(\eub{G})$  algebras  $A_P$  and  $K\rtimes^r_{\alpha_{\omega(P)}} G \otimes C_0(\eub{G})$  are  Morita  equivalent. 

\end{proposition}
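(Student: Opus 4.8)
The plan is to reduce the statement to a purely group-theoretic one by exploiting that the underlying space of $\eub{G}$ is contractible, and then to invoke the Packer--Raeburn machinery of twisted crossed products \cite{packerraeburn}. Since $\eub{G}$ is contractible after forgetting the $G$-action, its ordinary third integral cohomology vanishes, so the (non-equivariant) Dixmier--Douady class of the continuous trace bundle $\KK_P$ is zero. Hence I would first fix a $C_0(\eub{G})$-linear trivialization $A_P \cong C_0(\eub{G})\otimes\KK$, under which the only remaining nontrivial datum is the $G$-action $\beta$ inherited from the equivariant structure of $P$.

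First I would pin down the dynamics. Writing the trivialized action as $\beta_g(f)(x)=u_g\,f(g^{-1}x)\,u_g^{*}$ for a Borel family $u\colon G\to\UU(\HH)$, the failure of $u$ to be a genuine homomorphism is measured by a scalar $2$-cocycle, and the classification of projective unitary stable $G$-equivariant bundles (Theorem 4.8 of \cite{barcenasespinozajoachimuribe}), combined with the isomorphisms $\omega\colon H^3(\eub{G}\times_G EG,\IZ)\xrightarrow{\cong}H^3(G,\IZ)\xrightarrow{\cong}H^2(G,S^1)$, identifies its class with $\omega(P)$. Thus $u_g u_h=\alpha_{\omega(P)}(g,h)\,u_{gh}$, and $(C_0(\eub{G})\otimes\KK,\,G,\,\beta)$ is exactly the twisted dynamical system attached to translation on $\eub{G}$ together with the projective representation $u$ on $\KK$.

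Next I would untwist the compact-operator factor. Letting $w_g$ denote the canonical multiplier unitaries implementing $\beta$ in the crossed product, the family $v_g:=w_g(1\otimes u_g^{*})$ implements the \emph{untwisted} action $\tau_g\otimes\operatorname{id}_{\KK}$ of $G$ on $C_0(\eub{G})\otimes\KK$ and satisfies $v_g v_h=\overline{\alpha_{\omega(P)}(g,h)}\,v_{gh}$. This realizes the decomposition of twisted systems $(C_0(\eub{G})\otimes\KK,\beta)\cong(C_0(\eub{G}),\tau)\otimes(\KK,\operatorname{Ad}u)$ and exhibits, via the Packer--Raeburn stabilization and decomposition theorem \cite{packerraeburn}, the claimed Morita equivalence between $A_P$ and $K\rtimes^r_{\alpha_{\omega(P)}}G\otimes C_0(\eub{G})$; the surviving tensor factor $\KK$ contributes only a Morita-trivial stabilization.

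The main obstacle is the cocycle identification in the second step: although trivializing the bundle is routine, one must verify that the residual action carries \emph{precisely} the cocycle $\alpha_{\omega(P)}$ rather than a perturbation depending on the choices of trivialization and of the lift $u$, which forces one to track the equivariant Dixmier--Douady class through $\omega$ with care. A secondary difficulty is that the $G$-action on $\eub{G}$ is proper but not free, so the untwisting must be carried out $G$-equivariantly and fiberwise: the lift $u$ has to be compatible with the local stable homomorphisms $f_x\colon G_x\to P\UU(\HH)$ of Definition \ref{def projective unitary G-equivariant stable bundle}, and checking that these glue to a global projective representation with cocycle $\alpha_{\omega(P)}$ is precisely where stability and properness of the bundle are used.
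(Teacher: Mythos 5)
Your route is genuinely different from the paper's, which disposes of the statement in one stroke: both $A_P$ and the ``constant'' bundle are section algebras of elementary $C^*$-bundles over the action groupoid $\eub{G}\rtimes G$, and Kumjian's classification (Theorem 16 of \cite{kumjianelementary}) says that two such bundles are Morita equivalent precisely when their classes in $H^2(\eub{G}\times_G EG, S^1)$ agree; both classes here are $\omega(P)=[\alpha_{\omega(P)}]$, so there is nothing left to check. Your proposal instead tries to build the equivalence by hand, and it has a genuine gap exactly at the step you yourself flag as ``the main obstacle''. The non-equivariant trivialization $A_P\cong C_0(\eub{G})\otimes\KK$ is legitimate ($\eub{G}$ is contractible, so the ordinary Dixmier--Douady class vanishes), but the residual $G$-action it leaves behind has the form $\beta_g(f)(x)=\mathrm{Ad}\bigl(u_g(x)\bigr)\bigl(f(g^{-1}x)\bigr)$ for a $P\UU(\HH)$-valued map depending on \emph{both} $g$ and $x$, i.e.\ a groupoid $1$-cocycle on $\eub{G}\rtimes G$ whose lift to $\UU(\HH)$ produces an $S^1$-valued groupoid $2$-cocycle --- not, as you write, a family $u_g$ independent of $x$ with group cocycle $\alpha_{\omega(P)}$. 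Arranging the latter is essentially equivalent to the conclusion you are trying to prove: it requires (a) the identification $H^2(\eub{G}\rtimes G;S^1)\cong H^2(\eub{G}\times_G EG;S^1)\cong H^2(G;S^1)$, and (b) a rigidity statement that cohomologous cocycles yield Morita-equivalent bundles, not merely bundles with the same class. Point (b) is precisely Kumjian's theorem, so filling your gap collapses your argument onto the paper's; acknowledging the obstacle does not discharge it.

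A secondary mismatch: nowhere in your construction is a crossed product formed, yet the target of the claimed Morita equivalence is $\KK\rtimes^r_{\alpha_{\omega(P)}}G\otimes C_0(\eub{G})$. What your untwisting would actually produce (granting the step above) is an identification of $A_P$ with $C_0(\eub{G})\otimes\KK$ carrying the diagonal action ``translation tensor $\mathrm{Ad}(u)$''. Passing from that twisted system to the stated algebra is the job of the Packer--Raeburn stabilization trick, which in the paper appears as the separate map $PR$ in the theorem \emph{following} this proposition; invoking it inside the proof of the proposition conflates the two steps and leaves the bridge between your endpoint and the stated one unbuilt.
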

 
\begin{proof}
Consider  the  action  groupoid  $\Gamma:= \eub{G}\rtimes  G$.  Both  $C_{0}(\eub{G})$-algebras  are  section  algebras  of elementary $C^*$ bundles   over  the  space  of  objects  $\eub{G}$  of  $\Gamma$. 

Theorem 16  in  page  229  of  \cite{kumjianelementary} identifies  the Morita  equivalence  of  such  bundles  in terms  of   classes in a cohomology  group  isomorphic  to  the Borel  construction $H^2(\eub{G}\times_G EG, S^1)$. For  the   bundles  in  question,  the  classes  are  representatives  of  the class of  $\omega(P)$,  which  is  the  same  as  the  one  of  $\alpha_{\omega(P)}$. 
See  also  proposition 1.5  in \cite{echterhofflueckphillipswalters}
\end{proof}

We  will  assume  now  that  the  group  $G$  has   a   model  for  $\eub{G}$  which   is  constructed  out  of  finitely  many equivariant  cells.  Some  examples  of  such  groups  include fundamental  groups  of  hyperbolic  manifolds (where the  universal  cover  is  a  model), Gromov  Hyperbolic  groups (where  the  Rips  complex  with specific  parameters  provides  a  model),   Mapping  class  groups of  surfaces (the  Teichm\"uller  space  gives  a  model  for  $\eub{G}$),  and   groups  acting  properly  and  cocompacly  on  $CAT(0)$-spaces.

\begin{theorem}
Let  $G$  be  a   discrete  group with  a  model  for  $\eub{G}$  with  finitely  many cells. Then,  there  exists a  commutative  diagram 

$$\xymatrix@u{  \mathcal{R} KK_{G, \eub{G}}^* (C_{0}(\eub{G}), A_{P}) \ar[r]^-{C} & \mathcal{R} KK_{\eub{G}}^*( C_0(\eub{G}) , C_0(\eub{G}) \otimes  \mathcal{K}\rtimes_{\alpha_{\omega(P)}}^r G) \ar[r]^-{BC}  &   K_{*}( \mathcal{K}\rtimes_{\alpha_{\omega(P)}}^r  G ) \ar[d]^{PR} \\ \kgeom{G, *}{\eub{G}}{P} \ar[u]^-{B^{-1}\circ \rm Index}   &   & K_*(C_r^*(G, \alpha_{\omega(P)}) ).  &  } $$

\end{theorem}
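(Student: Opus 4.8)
The plan is to exhibit each of the four arrows as a composite of the comparison isomorphisms established above and then to check that these pieces assemble compatibly; the composite is then the twisted geometric assembly map. I begin with the left vertical arrow. By Theorem~\ref{theoremcoincidencekhomology}, applied to the finite proper $G$-CW complex $\eub{G}$ (here the finite-cell hypothesis is exactly the cocompactness the theorem requires), the map $\mathrm{Index}$ is an isomorphism $\kgeom{G,*}{\eub{G}}{P}\to KK_G^*(\IC,A_P)$. Since $\eub{G}$ is proper, $A_P$ is a proper $G$-$C^*$-algebra, so the Green--Julg isomorphism for proper actions identifies $KK_G^*(\IC,A_P)$ with $KK^*(\IC,A_P\rtimes G)$; composing with the inverse of the isomorphism $B$ of Theorem~\ref{theoremequivalencekk} yields the stated map $B^{-1}\circ\mathrm{Index}$ landing in $\mathcal{R}KK_{G,\eub{G}}^*(C_0(\eub{G}),A_P)$. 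This step is essentially formal once Theorems~\ref{theoremcoincidencekhomology} and~\ref{theoremequivalencekk} are in place.

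Next I would construct the horizontal arrow $C$ by combining Kasparov descent with the Morita equivalence of Proposition~\ref{propositioncontinuostraceeub}. Descent along the action groupoid $\Gamma=\eub{G}\rtimes G$ converts a $G$-equivariant representable class over $\eub{G}$ into a representable class over $\eub{G}$ whose coefficient is the crossed product $A_P\rtimes G$, and the equivalence $A_P\sim C_0(\eub{G})\otimes\KK\rtimes^r_{\alpha_{\omega(P)}}G$ of Proposition~\ref{propositioncontinuostraceeub} then rewrites this coefficient in the form appearing at the middle vertex. The delicate point is that Proposition~\ref{propositioncontinuostraceeub} supplies only a $C_0(\eub{G})$-linear, a priori non-equivariant, equivalence, so a genuine part of the work is to verify that the implementing bimodule carries a compatible $\Gamma$-action and therefore intertwines descent; I single this out below as the main obstacle.

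For the remaining arrows I would argue as follows. The map $BC$ is the collapse isomorphism: because $\eub{G}$ is non-equivariantly contractible and, by hypothesis, built from finitely many cells, representable $KK$ over $\eub{G}$ with $C_0(\eub{G})$ coefficients reduces to ordinary $K$-theory of the fibre, giving $\mathcal{R}KK_{\eub{G}}^*(C_0(\eub{G}),C_0(\eub{G})\otimes\KK\rtimes^r_{\alpha_{\omega(P)}}G)\cong K_*(\KK\rtimes^r_{\alpha_{\omega(P)}}G)$. The map $PR$ is the stabilization isomorphism: the $G$-action on $\KK$ induced by the cocycle yields $\KK\rtimes^r_{\alpha_{\omega(P)}}G\cong\KK\otimes C_r^*(G,\alpha_{\omega(P)})$, whence Morita invariance of $K$-theory gives $K_*(\KK\rtimes^r_{\alpha_{\omega(P)}}G)\cong K_*(C_r^*(G,\alpha_{\omega(P)}))$.

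Finally I would verify that these maps assemble compatibly. Since every arrow is built from natural transformations (Green--Julg, descent, Morita equivalence, collapse, stabilization), it suffices to check compatibility on the building blocks furnished by a contractible-slice cover of $\eub{G}$, running the spectral-sequence comparison of Proposition~\ref{proposition spectral sequence  twisted} in the style of Theorem~\ref{theoremcoincidencekhomology}: over a single orbit $G/H_\sigma$ all four constructions reduce to the twisted representation theory of the central extension $\widetilde{H_{P_\sigma}}$, so the induced maps agree on the $E^2$-pages and hence throughout. The main obstacle is the compatibility flagged above: one must show that the $C_0(\eub{G})$-linear Morita equivalence of Proposition~\ref{propositioncontinuostraceeub} intertwines Kasparov descent for $\Gamma$ with the subsequent collapse and stabilization, so that the composite $PR\circ BC\circ C$ is precisely the twisted Baum--Connes assembly map; granting this orbit-by-orbit comparison, commutativity of the whole diagram follows from naturality and a five-lemma induction over the finitely many cells of $\eub{G}$.
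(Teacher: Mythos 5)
Your identification of the outer arrows is essentially the paper's: the left vertical map is ${\rm Index}$ from Theorem \ref{theoremcoincidencekhomology} followed by $B^{-1}$ from Theorem \ref{theoremequivalencekk} (your insertion of a Green--Julg type identification $KK_G^*(\IC,A_P)\cong KK(\IC,A_P\rtimes G)$ to make that composite well defined is a reasonable repair of a mismatch the paper leaves implicit), and $PR$ is indeed the Packer--Raeburn stabilization $\KK\rtimes^r_{\alpha_{\omega(P)}}G\cong \KK\otimes C_r^*(G,\alpha_{\omega(P)})$ together with Morita invariance of $K$-theory. The two middle arrows, however, are misidentified, and that is where your argument breaks. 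The map $C$ is not Kasparov descent: it is simply the change of coefficients along the $C_0(\eub{G})$-linear Morita equivalence of Proposition \ref{propositioncontinuostraceeub} between $A_P$ and $C_0(\eub{G})\otimes(\KK\rtimes^r_{\alpha_{\omega(P)}}G)$. The algebra $A_P\rtimes G$ produced by your descent step is not the coefficient algebra of the middle vertex, so your version of $C$ does not land where the diagram requires.

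Consequently your treatment of $BC$ also fails. In the paper $BC$ is the Baum--Connes assembly map with coefficients in $\KK\rtimes^r_{\alpha_{\omega(P)}}G$, defined on the $G$-equivariant representable group (the missing $G$ in the subscript of the middle vertex is a typo), and its bijectivity is the substantive input of this step: it is quoted from Theorem 5.4 of \cite{kasparovskandalis} (assembly with coefficients in a proper $G$-$C^*$-algebra over the cocompact space $\eub{G}$), not derived formally. Your ``collapse'' argument, which uses only the non-equivariant contractibility of $\eub{G}$ to reduce $\mathcal{R}KK_{\eub{G}}$ to the $K$-theory of the fibre, is valid only if the middle vertex carries no $G$-action; but on that reading the preceding map $C$ cannot be an isomorphism (it would have to forget the entire group action), so your two readings are incompatible and the composite would not compute $K_*(C_r^*(G,\alpha_{\omega(P)}))$. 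Finally, the closing spectral-sequence verification of commutativity is unnecessary machinery: the diagram is a single chain of arrows with only one path from $\kgeom{G,*}{\eub{G}}{P}$ to $K_*(C_r^*(G,\alpha_{\omega(P)}))$, so nothing remains to be checked once each arrow is exhibited as a well-defined isomorphism --- which is exactly what the paper's (very terse) proof records. Your concern about the $G$-equivariance of the Morita bimodule is, on the other hand, a legitimate point that the paper itself glosses over.
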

\begin{proof}
The map $PR$  is  an  isomorphism  due  to  the  Packer-Raeburn Stabilization   Trick,  Theorem 3.4 in  page  299 of  \cite{packerraeburn}. 

The  map $BC$,  the  Baum-Connes  Assembly  map  with  coefficients  in  $\KK\rtimes_{\alpha_{\omega(P)}}G$  is an isomorphism  because  of Theorem 5.4,  page 180   of \cite{kasparovskandalis}.   

The  map $C$  is  an  isomorphism because  the  $C_0( \eub{G})$-Morita  equivalence  of   \ref{propositioncontinuostraceeub} induces  an  isomorphism  in the  representable  $KK$-theory  group. 

The last  map  is  the  ${\rm Index}$ isomorphism  map  of  \ref{theoremcoincidencekhomology} composed  with  the  isomorphism $B^{-1}$.

\end{proof}

\bibliographystyle{abbrv}
\bibliography{GEOMETRIC}

\end{document}